\documentclass[a4paper, 10pt, twoside]{article}

\usepackage{amsmath, amscd, amsfonts, amssymb, amsthm, latexsym, url, color, todonotes, bm, framed, rotating, enumerate} 
\usepackage{graphicx}
\usepackage[left=1in, right=1in, top=1.2in, bottom=1in, includefoot, headheight=13.6pt]{geometry}
\usepackage{mathtools}
\input{xy}
\xyoption{all}

\usepackage{tikz}
\usepackage{dsfont}
\usetikzlibrary{matrix}

\usepackage[colorlinks,breaklinks=true]{hyperref}
\usepackage[figure,table]{hypcap}
\hypersetup{
	bookmarksnumbered,
	pdfstartview={FitH},
	citecolor={black},
	linkcolor={black},
	urlcolor={black},
	pdfpagemode={UseOutlines}
}
\makeatletter
\newcommand\org@hypertarget{}
\let\org@hypertarget\hypertarget
\renewcommand\hypertarget[2]{%
  \Hy@raisedlink{\org@hypertarget{#1}{}}#2%
} 
\makeatother 

\setlength{\marginparwidth}{0.8in}

\newtheorem{theorem}{Theorem}[section]

\newtheorem{corollary}[theorem]{Corollary}
\newtheorem{proposition}[theorem]{Proposition}

\theoremstyle{definition}
\newtheorem{definition}[theorem]{Definition}
\newtheorem{remark}[theorem]{Remark}

\newtheorem{conjecture}[theorem]{Conjecture}

\newtheorem{notat}[theorem]{Notation}

\newcommand{\xysquare}[8]{
\[\xymatrix{
#1 \ar@{#5}[r] \ar@{#6}[d] & #2 \ar@{#7}[d]\\
#3 \ar@{#8}[r] & #4
}\]
}

\newcommand{\bb}{\mathbb}

\newcommand{\comment}[1]{}

\renewcommand{\phi}{\varphi}

\newcommand{\roi}{\mathcal{O}}

\newcommand{\xto}{\xrightarrow}

\renewcommand{\cal}{\mathcal}

\renewcommand{\ker}{\operatorname{Ker}}

\DeclareMathOperator{\codim}{codim}

\DeclareMathOperator{\Spec}{Spec}

\def\et{{\mathrm{\acute{e}t}}}

\newcommand{\CH}{C\!H}



\newcommand{\rcoeq}[3]{\xymatrix{ #1\ar@/^3mm/[r]^f \ar@/_3mm/[r]_g & #2 \ar[l]_e\ar[r] & #3}}

\usepackage[bbgreekl]{mathbbol}
\DeclareSymbolFontAlphabet{\mathbbm}{bbold}

\usepackage{fancyhdr}

\pagestyle{fancy}
\fancyhead{}
\fancyfoot[C]{\thepage}
\fancyhead[EL, OR]{}
\fancyhead[EC]{Morten L\"uders }
\fancyhead[OC]{Chow groups of $1$-cycles}

\usepackage{sectsty}
\sectionfont{\Large\sc\centering}
\chapterfont{\large\sc\centering}
\chaptertitlefont{\LARGE\centering}
\partfont{\centering}

\begin{document}
\itemsep0pt

\title{On analogues of the Kato conjectures and proper base change for $1$-cycles on rationally connected varieties}

\author{Morten L\"uders }

\date{}

\maketitle

\begin{abstract} In 1986, Kato set up a framework of conjectures relating (higher) $0$-cycles and \'etale cohomology for smooth projective schemes over finite fields or rings of integers in local fields through the homology of so-called Kato complexes. In analogy, we develop a framework of conjectures for $1$-cycles on smooth projective rationally connected varieties over algebraically closed fields and for families of such varieties over henselian discrete valuation rings with algebraically closed fields. This is partly motivated by results of Colliot-Th\'el\`ene-Voisin \cite{CTVoisin2012} in dimension $3$. We prove some special cases building on recent results of Koll\'ar-Tian \cite{KollarTian}. \end{abstract}


\section{Introduction}
In \cite{Ka86} Kato set up a framework of conjectures generalising higher dimensional class field theory and the cohomological Hasse principle. Subject of these conjectures are, among others, the complexes
$$C_n^0(X):=\bigoplus_{x\in X_d}H^{d+1}(x,\bb Z/n\bb Z(d))\xto{\partial}...\xto{\partial}\bigoplus_{x\in X_1}H^2(x,\bb Z/n\bb Z(1))\xto{\partial} \bigoplus_{x\in X_0}H^1(x,\bb Z/n\bb Z(0))$$ 
for an arithmetical scheme $X$ with the direct sum $\bigoplus_{x\in X_d}H^{a+1}(x,\bb Z/n\bb Z(a)$ in degree $a$. For simplicity we focus on the case that $n$ is invertible on $X$ in which the coefficients are $\bb Z/n\bb Z(a):=\mu_n^{\otimes a}$. The homology groups of $C_n^0(X)$ are denoted by $KH^{(0)}_a(X)$. 
\begin{conjecture}\label{conjecture Kato1986}
(Kato) Let $S$ be the spectrum of a finite field or henselian discrete valuation ring with finite residue field. Let $X$ be a smooth projective scheme over $S$. Then 
$$KH^{(0)}_a(X)= \begin{cases}
      \bb Z/n\bb Z & \text{if $a=0$ and $\dim S=0$},\\
      0 & \text{otherwise}.
    \end{cases} $$
\end{conjecture}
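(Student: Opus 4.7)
The plan is to argue by induction on the relative dimension $d = \dim(X/S)$, with the base cases supplied by class field theory and the inductive step via a hypersurface-section argument.

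First I would settle the base case $d = 0$, in which $X$ is finite \'etale over $S$. If $\dim S = 0$, each summand $H^1(x, \bb Z/n\bb Z)$ is a copy of $\bb Z/n\bb Z$ (Galois cohomology of the absolute Galois group $\hat{\bb Z}$ of a finite field), so for connected $X$ the complex has homology $\bb Z/n\bb Z$ in degree $0$, matching the claim. If $\dim S = 1$, the Kato complex reduces to the reciprocity map $H^2(\eta, \mu_n) \to \bigoplus_{x} H^1(x, \bb Z/n\bb Z)$ between the Brauer group of the generic fiber and the character group of the special fiber, which is an isomorphism by local class field theory, giving the required vanishing. For low-dimensional $X$ the statement is Kato's own theorem (curves) and results of Kato-Colliot-Th\'el\`ene (surfaces).

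For the inductive step, I would invoke Poonen's Bertini theorem over finite fields (and an analogue over hDVRs with finite residue field) to produce a smooth projective hypersurface section $Y \hookrightarrow X$ of relative dimension $d - 1$. The localization sequence of the pair $(X, Y)$ provides a long exact sequence
\[
\cdots \to KH^{(0)}_a(Y) \to KH^{(0)}_a(X) \to KH^{(0)}_a(U) \to KH^{(0)}_{a-1}(Y) \to \cdots
\]
with $U = X \setminus Y$, and I would aim to show that $KH^{(0)}_a(U) = 0$ for $a \geq 1$ (via an Artin-vanishing style argument on the affine open $U$) and then combine this with the inductive hypothesis on $Y$. For the degree-zero vanishing in dimensions $\geq 1$ one invokes higher unramified class field theory of Kato-Saito, which identifies $KH^{(0)}_0(X)$ with $\pi_1^{\mathrm{ab}}(X)/n$ modulo the contribution from the base and gives the claimed single $\bb Z/n\bb Z$ or the claimed $0$ answer, according to $\dim S$.

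The hardest step is controlling $KH^{(0)}_a(U)$ for the open complement: Jannsen-Saito handle this via de Jong's alterations together with a trace/pushforward formalism on Kato complexes along generically finite maps of prime-to-$n$ degree, and the argument requires substantial care to avoid losing information when passing to alterations. A second delicate point is arranging that the Bertini step produces a hypersurface whose fibers over $S$ are simultaneously smooth, so that the inductive hypothesis applies directly; in the mixed-characteristic setting this amounts to a relative Bertini theorem for smooth projective schemes over hDVRs with finite residue field, and is the principal technical obstruction to packaging the induction cleanly.
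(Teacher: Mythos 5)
There is nothing in the paper to compare your argument against: the statement you were given is Conjecture \ref{conjecture Kato1986}, which the paper records as Kato's 1986 conjecture and does not prove. The author only remarks that it is ``now proved with invertible coefficients'' and cites Kerz--Saito \cite{KeS12}; the paper's own contributions concern the analogous complexes $C^{-1}_n$ and $C^{-2}_n$ for $1$-cycles, not $C^0_n$. So your proposal should be judged as a sketch of the Jannsen--Saito--Kerz--Saito theorem, not against anything in this text.

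On its own terms, your outline points at the correct circle of ideas (base cases from local and higher class field theory, induction on dimension via Bertini and the localization sequence for Kato homology, the identification of $KH^{(0)}_0(X)$ with $\pi_1^{\mathrm{ab}}(X)/n$ up to the base), but the two steps you flag as ``hardest'' are not side issues --- they are the entire content of the published proofs, and your sketch does not close them. The vanishing of $KH^{(0)}_a(U)$ for an affine open $U$ is a Lefschetz-type statement that in Jannsen--Saito's work over finite fields requires Deligne's weight arguments, and over a henselian discrete valuation ring requires Gabber's prime-to-$\ell$ alterations and the machinery of \cite{KeS12}; it cannot be obtained by a generic ``Artin-vanishing style argument,'' since Artin vanishing controls cohomological dimension, not the homology of the Kato complex in the relevant range. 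Likewise, the naive induction via a single smooth hypersurface section does not run as stated: one must handle singular subvarieties appearing in the localization sequence, which is exactly why alterations and a trace formalism of prime-to-$n$ degree enter, and the relative Bertini statement over a hDVR with finite residue field that you invoke is itself a nontrivial input. As a roadmap to the literature your sketch is serviceable; as a proof it has genuine gaps at precisely these two points.
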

The importance of the Conjecture \ref{conjecture Kato1986}, now proved with invertible coefficients \cite{KeS12}, comes from the fact that the groups $KH^{(0)}_a(X)$ measure the difference between the Zariski and the \'etale motivic cohomology (with finite coefficients) of $X$ in the range of higher zero-cycles, thereby immensely generalising class field theory. This has many applications to algebraic cycles, for example it implies the finiteness of Chow groups in the range of higher zero-cycles over finite fields, as predicted by Bass' finiteness conjecture. 
Motivated by results of Colliot-Th\'el\`ene-Voisin \cite{CTVoisin2012}, we propose similar conjectures for one-cycles on smooth projective rationally connected varieties over algebraically closed fields or families of such varieties over a henselian discrete valuation ring with algebraically closed residue field. Subject of these conjectures are the complexes
$$C^{-1}_n(X):=\bigoplus_{x\in X_d}H^d(x,\bb Z/n\bb Z(d-1))\xto{\partial}...\xto{\partial}\bigoplus_{x\in X_1}H^1(x,\bb Z/n\bb Z(0))\xto{\partial} \bigoplus_{x\in X_0}H^0(x,\bb Z/n\bb Z(-1)),$$
and
$$C^{-2}_n(X):=\bigoplus_{x\in X_d}H^{d-1}(x,\bb Z/n\bb Z(d-2))\xto{\partial}...\xto{\partial}\bigoplus_{x\in X_2}H^1(x,\bb Z/n\bb Z(0))\xto{\partial} \bigoplus_{x\in X_1}H^0(x,\bb Z/n\bb Z(-1)),$$
where the sum $\oplus_{x\in X_a}$ is placed in degree $a$. Let $KH^{(i)}_a(X)$ denote the homology of $C^{i}_n(X)$ in degree $a$ ($i=-2,-1$). 
\begin{conjecture}\label{conjecture_Kato}
\begin{enumerate}
\item[(1)]
Let $X$ be a smooth projective separably rationally connected variety of dimension $d$ over an algebraically closed field $k$ and $n\in k^\times$. Then
$$KH^{(-1)}_a(X)= \begin{cases}
      \bb Z/n\bb Z & \text{if $a=0$},\\
      0 & \text{otherwise}.
    \end{cases} $$
\item[(2)]
(Saito-Sato) Let $X$ be a smooth projective scheme of absolute dimension $d$ over a henselian discrete valuation ring with algebraically closed residue field $k$ and $n\in k^\times$.
Then $KH^{(-1)}_a(X)=0$ for all $a\geq 0$.
\item[(3)]
Let $X$ be a smooth projective scheme of absolute dimension $d$ over a henselian discrete valuation ring with algebraically closed residue field $k$ and $n\in k^\times$.
If the special fiber of $X$ is separably rationally connected then 
$$KH^{(-2)}_a(X)= \begin{cases}
      \bb Z/n\bb Z & \text{if $a=1$},\\
      0 & \text{otherwise}.
    \end{cases} $$
\end{enumerate}
\end{conjecture}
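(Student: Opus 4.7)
The unifying strategy combines two standard tools in Kato-style arguments: a Bloch--Ogus--Kato coniveau (niveau) spectral sequence identifying the Kato homology $KH^{(i)}_*(X)$ with the discrepancy between \'etale and Zariski motivic cohomology of $X$ in the range relevant to $|i|$-codimensional cycles, and a Jannsen--Saito short exact sequence of Kato complexes splitting the complex on a scheme over a henselian DVR into contributions from the special and generic fibers. Voevodsky's norm residue theorem is the universal input relating the field-of-points terms $H^*(x,\bb Z/n\bb Z(*))$ to Milnor $K$-theory, hence to cycles.

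For part~$(1)$, I would aim to identify $KH^{(-1)}_0(X)$ with a quotient of $CH_1(X)/n$ and invoke the Koll\'ar--Tian theory of rational simple connectedness: on a separably rationally connected $X$ the group $CH_1(X)/n$ should be generated by classes of rational curves, and the degree map
$$CH_1(X)/n \longrightarrow \bb Z/n\bb Z$$
should then be the promised isomorphism. The higher vanishing $KH^{(-1)}_a(X)=0$ for $a>0$ would follow from the coniveau spectral sequence combined with Bloch--Kato, in the spirit of Kerz--Saito's proof of the classical Kato conjecture. The Colliot-Th\'el\`ene--Voisin result in dimension~$3$ serves both as motivation and as the template.

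For part~$(3)$, I would apply the devissage short exact sequence
$$0\longrightarrow C^{-1}_n(X_s)[-1] \longrightarrow C^{-2}_n(X) \longrightarrow C^{-2}_n(X_\eta) \longrightarrow 0$$
(modulo the correct Tate twist accounting for the DVR direction) and take the long exact sequence in Kato homology. Part~$(1)$ applied to the separably rationally connected special fiber $X_s$ of dimension $d-1$ contributes $\bb Z/n\bb Z$ in degree $0$ and $0$ elsewhere, which the shift $[-1]$ promotes to the asserted $\bb Z/n\bb Z$ in degree $1$ of $KH^{(-2)}_*(X)$. What remains is the vanishing of $KH^{(-2)}_*(X_\eta)$, which is an analogue of part~$(1)$ over the fraction field $K=\Frac R$ rather than over an algebraically closed field; here one exploits that $K$ has \'etale cohomological dimension $1$, so that the top layers of the niveau spectral sequence collapse. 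Part~$(2)$ is a version of the same mechanism where no rational connectedness hypothesis is imposed and one seeks total vanishing instead.

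The decisive obstacle is part~$(1)$: showing that, modulo $n$, $1$-cycles on a separably rationally connected variety of arbitrary dimension are generated by rational curves with exactly one degree class surviving. In dimension~$3$ this is Colliot-Th\'el\`ene--Voisin; beyond it rests on the still-young Koll\'ar--Tian theory and on a proper base change statement for $1$-cycles of the sort signalled in the title of the paper. These are where the main effort must go, and accordingly where the excerpted paper is expected only to establish special cases of the full conjecture.
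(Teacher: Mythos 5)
This statement is a conjecture, and the paper neither proves it nor claims to: it only establishes the special cases collected in Theorem \ref{theorem_main_intro} (low homological degree, low dimension, or conditional on the integral Hodge conjecture and on part (2)). Your proposal is likewise not a proof --- you concede the decisive step yourself --- so the honest verdict is that there is nothing to certify as correct here. What can be compared is your strategy against the paper's partial results, and there your sketch contains concrete errors. First, you place the $1$-cycle contribution in the wrong homological degree: the degree-$0$ term of $C^{-1}_n(X)$ is $\bigoplus_{x\in X_0}H^0(x,\bb Z/n\bb Z(-1))$, indexed by \emph{zero-dimensional} points, so $KH^{(-1)}_0(X)$ is not a quotient of $\CH_1(X)/n$; the paper computes it as the edge term $H^{2d}_{\et}(X,\mu_n^{\otimes d-1})\cong\bb Z/n\bb Z$ of the coniveau spectral sequence (Propositions \ref{proposition_long_exact_sequence} and \ref{proposition_KH1}). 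The group $\CH^{d-1}(X)/n=\CH_1(X)/n$ enters only at $KH^{(-1)}_2$ and $KH^{(-1)}_3$, via the exact sequence $\CH^{d-1}(X)/n\to H^{2d-2}_{\et}(X,\mu_n^{\otimes d-1})\to KH^{(-1)}_2(X)\to 0$, and its surjectivity is exactly the integral Hodge conjecture for $1$-cycles --- not a consequence of generation by rational curves via a degree map. Relatedly, the Koll\'ar--Tian input [Thm.~8] is a statement about $A_1(X_K)/n\isoto A_1(X_k)/n$ over the DVR (Theorem \ref{theorem_Kollar_Tian}), not a ``rational simple connectedness'' result used for part (1).

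Second, your d\'evissage for part (3) is written backwards. For the closed immersion $X_k\into X$ with open complement $X_K$, the boundary maps of the Kato complex send generic-fiber points to special-fiber points, so the special fiber forms the \emph{subcomplex} and carries the index $-2$ without shift, while the generic fiber is the quotient with the shift: the correct sequence is
$$0\to C^{-2}_n(X_k)\to C^{-2}_n(X)\to C^{-1}_n(X_K)[-1]\to 0,$$
not $0\to C^{-1}_n(X_s)[-1]\to C^{-2}_n(X)\to C^{-2}_n(X_\eta)\to 0$. Your version happens to output $\bb Z/n\bb Z$ in degree $1$, but from the wrong term; with the correct sequence the analysis of $C^{-2}_n(X_k)$ and $C^{-1}_n(X_K)$ is a genuinely new problem, not an application of part (1) to $X_s$. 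The paper avoids this d\'evissage altogether for its partial results, working instead with proper base change $H^*_{\et}(X,\mu_n^{\otimes d-2})\cong H^*_{\et}(X_k,\mu_n^{\otimes d-2})$, the long exact sequence of Proposition \ref{proposition_long_exact_sequence2} (which already presupposes Conjecture \ref{conjecture_Kato}(2)), and the Koll\'ar--Tian surjectivity of $\CH_2(X)/n\to\CH_1(X_k)/n$. If you want to pursue the conjecture, the open core is precisely the surjectivity statements your sketch treats as available: the integral Hodge conjecture for $1$-cycles on rationally connected varieties in dimension $\geq 4$ and the full Saito--Sato conjecture (2).
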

We summarise our main results about these conjectures in the following theorem (for an exact dedication of the conjectures and statements see Remark \ref{remark_main_thm} below).
\begin{theorem}\label{theorem_main_intro}
\begin{enumerate}
\item Let $X$ be a smooth projective separably rationally connected variety of dimension $d$ over an algebraically closed field $k$ and $n\in k^\times$.
Then $KH^{(-1)}_0(X)=\bb Z/n\bb Z$, $KH^{(-1)}_1(X)=0$ and $KH^{(-1)}_2(X)=0$ if $k=\bb C$ and the integral Hodge conjecture (IHC) holds for $1$-cycles on $X$. Furthermore, Conjecture \ref{conjecture_Kato}(i) holds if $k=\bb C$ and $d\leq 3$.
\item Conjecture \ref{conjecture_Kato}(ii) holds for $a\leq 3$. 
\item Let $X$ be a smooth projective scheme of absolute dimension $d$ over a henselian discrete valuation ring with algebraically closed residue field $k$ and $n\in k^\times$. Assume that the special fiber $X_k$ of $X$ is separably rationally connected. Then 
$$KH^{(-2)}_a(X)= \begin{cases}
      0 &  \text{if $a=0$},\\
      \bb Z/n\bb Z & \text{if $a=1$},\\
      0 &  \text{if $a=2,3,\; k=\bb C$, the IHC holds for $1$-cycles on $X_k$ } \\
      &\text{and Conjecture \ref{conjecture_Kato}(2) holds.}
    \end{cases} $$
Furthermore, if $d=4$ and $k=\bb C$ then $KH^{(-2)}_3(X)=0$ and $KH^{(-1)}_4(X)\cong KH^{(-2)}_2(X)\cong \CH^{d-1}(X,1,\bb Z/n)$.
\end{enumerate}
\end{theorem}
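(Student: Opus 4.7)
The plan is to analyze the complexes $C^{-1}_n(X)$ and $C^{-2}_n(X)$ as strands of the Bloch-Ogus niveau spectral sequence and compute their homology using the associated spectral sequence abutting to \'etale cohomology in the range of $1$-cycles. I would treat the three parts in order, since parts (1) and (2) feed into part (3).

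For part (1), I would first identify $KH^{(-1)}_0(X)$ with $\CH_0(X)/n$ via the classical description of the bottom boundary of the Kato complex as the divisor/tame-symbol map modulo $n$; separable rational connectedness over an algebraically closed field gives $\CH_0(X) = \bb Z$, hence $\bb Z/n\bb Z$. To obtain $KH^{(-1)}_1(X) = 0$, I would relate this group to the cokernel of a cycle class map $\CH^{d-1}(X)/n \to H^{2d-2}_{\et}(X, \bb Z/n(d-1))$ and invoke the Koll\'ar-Tian result of \cite{KollarTian} that $1$-cycles on a separably rationally connected variety are generated by rational curves, producing enough explicit cycles to hit every class. For $KH^{(-1)}_2(X) = 0$ under IHC for $1$-cycles and $k=\bb C$, I would combine the Bloch-Kato theorem with the integral Hodge condition, which controls the next Bloch-Ogus $E^2$ term by a Hodge-theoretic argument in the spirit of \cite{CTVoisin2012}. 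In dimension $d \leq 3$, higher $a$ vanish by the length of $C^{-1}_n(X)$, and IHC for $1$-cycles on $3$-folds is precisely the Colliot-Th\'el\`ene-Voisin theorem.

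For part (2), I would combine proper base change with the henselian structure of the base: low values of $a$ are accessible by a direct unravelling of the top few terms of $C^{-1}_n(X)$, and the vanishing of \'etale cohomology in the appropriate relative degrees propagates through the Bloch-Ogus spectral sequence. For part (3), the complex $C^{-2}_n(X)$ sits in a localization triangle with suitably shifted Kato complexes on the generic fiber $X_\eta$ and the special fiber $X_k$; chasing the associated long exact sequence, together with parts (1) and (2) applied to the fibers, yields $KH^{(-2)}_0(X) = 0$ and $KH^{(-2)}_1(X) = \bb Z/n\bb Z$ (the latter inherited from $KH^{(-1)}_0(X_k)$), and, under the IHC plus Conjecture \ref{conjecture_Kato}(2) hypotheses, the vanishing for $a = 2, 3$. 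The $d=4$ refinement follows from a Bloch-Ogus $E^2$-to-$E^\infty$ comparison: in this range both $KH^{(-1)}_4(X)$ and $KH^{(-2)}_2(X)$ land in the same line and are identified with the higher Chow group $\CH^{d-1}(X, 1, \bb Z/n)$ via Bloch's formula.

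The main obstacle I expect is the appeal to the integral Hodge conjecture: reducing $KH^{(-1)}_2(X) = 0$ and the analogous statements in (3) for $a=2,3$ to a genuine IHC condition requires a careful analysis of the interaction between the Bloch map on torsion cycles and the niveau filtration on \'etale cohomology. A secondary technical issue is the layered dependence of part (3) on parts (1) and (2), together with careful bookkeeping of Bloch-Ogus differentials in the overlap range of $C^{-1}_n$ and $C^{-2}_n$ needed for the $d=4$ identification with $\CH^{d-1}(X,1,\bb Z/n)$.
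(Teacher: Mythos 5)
Your overall framework (comparing the niveau spectral sequence for higher Chow groups with the coniveau spectral sequence for \'etale cohomology, so that the $KH$-groups measure the failure of cycle class maps to be isomorphisms) is exactly the paper's setup, but several of your concrete reductions are wrong or miss the essential inputs. First, the degree bookkeeping in part (1) is off: the cokernel of $\CH^{d-1}(X)/n\to H^{2d-2}_{\et}(X,\mu_n^{\otimes d-1})$ is $KH^{(-1)}_2(X)$, not $KH^{(-1)}_1(X)$. The group $KH^{(-1)}_1(X)$ is isomorphic to $H^{2d-1}_{\et}(X,\mu_n^{\otimes d-1})$, and its vanishing comes from the fact that proper smooth separably rationally connected varieties are (algebraically) simply connected, i.e.\ $\pi_1^{\mathrm{ab}}(X)=0$ --- an input you never invoke. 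Worse, your claim that Koll\'ar--Tian produces ``enough explicit cycles to hit every class'' in $H^{2d-2}_{\et}$ would amount to an unconditional proof of the mod-$n$ integral Hodge conjecture for $1$-cycles on SRC varieties; that is precisely what is \emph{not} known, which is why the paper must assume the IHC to get $KH^{(-1)}_2(X)=0$. (Also, $KH^{(-1)}_0(X)$ is not $\CH_0(X)/n$ via a divisor map --- the twist in $C^{-1}_n$ is shifted by one relative to $C^0_n$; it is $H^{2d}_{\et}(X,\mu_n^{\otimes d-1})\cong\bb Z/n\bb Z$, and rational connectedness plays no role in that degree.) Finally, for $d=3$ the top homology $KH^{(-1)}_3(X)=H^3_{nr}(X,\mu_n^{\otimes 2})$ does \emph{not} vanish ``by the length of the complex''; it is the interesting unramified cohomology group, killed only by combining the exact sequence $0\to H^3_{nr}(X,\bb Z(2))/n\to H^3_{nr}(X,\mu_n^{\otimes 2})\to Z^4(X)[n]\to 0$ with the decomposition-of-the-diagonal vanishing of $H^3_{nr}(X,\bb Z(2))/n$ and Voisin's IHC for uniruled threefolds.

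In part (3) your localization triangle feeds in the wrong objects: the quotient complex is (a shift of) $C^{-1}_n(X_\eta)$ for the generic fiber $X_\eta$, which lives over the fraction field of the henselian DVR --- not an algebraically closed field --- so part (1) does not apply to it, and the subcomplex involves $KH^{(-2)}_*(X_k)$, which no part of the theorem computes. The paper instead works with the global coniveau filtration on $H^{2d-4}_{\et}(X,\mu_n^{\otimes d-2})$, and the decisive ingredient for $a=3$ (and for the $d=4$ collapse) is the Koll\'ar--Tian specialization theorem giving the surjection $\CH_2(X)/n\onto\CH_1(X_K)/n\onto\CH_1(X_k)/n$; combined with proper base change and the IHC on $X_k$ this forces $\CH_2(X)/n\to H^{2d-4}_{\et}(X,\mu_n^{\otimes d-2})$ to be surjective, which is what kills $KH^{(-2)}_3(X)$ and, for $d=4$, identifies $KH^{(-1)}_4(X)\cong KH^{(-2)}_2(X)\cong\CH^{d-1}(X,1,\bb Z/n)$. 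You cite Koll\'ar--Tian only in part (1), where it does not do the job you assign it, and omit it from part (3), where it is the key ingredient. Part (2) should simply be the citation of Saito--Sato; your sketch of a proof by ``unravelling the top few terms'' is not a substitute for their theorem.
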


\begin{remark}\label{remark_main_thm} \begin{enumerate}
\item[(1)] Theorem \ref{theorem_main_intro}(i) for $d\leq 3$ is due to Colliot-Th\'el\`ene-Voisin \cite[Prop. 6.3(ii)]{CTVoisin2012} and motivates Conjecture \ref{conjecture_Kato}(i).  We expect that it is well-known to experts and therefore do not claim any originality. For example in \cite{Schreieder2021}, following Theorem 1.4, Schreieder asks if the unramified cohomology $H^{d=\dim X}_{nr}(X,\bb Z/n)=KH_d^{(-1)}(X)$ vanishes for rationally connected varieties. 
Conjecture \ref{conjecture_Kato}(2) would have the consequence that the cycle class map
$$\rho_X^{2d-2-q,d-1}:\CH^{d-1}(X,q,\bb Z/n\bb Z)\to H^{2d-2-q}_{\et}(X,\mu_n^{\otimes d-1})$$
is an isomorphism for all $q\geq 0$.
\item[(2)] Conjecture \ref{conjecture_Kato}(2) is due to Saito-Sato \cite[Conj. 2.11]{SS10} and Theorem \ref{theorem_main_intro}(ii) is proved by them in \cite[Thm. 2.13]{SS10}. 
\item[(3)]  Conjecture \ref{conjecture_Kato}(2) and (3) combined with (1) would imply that the restriction map 
$$res^{d-2,q}_{X,\bb Z/n}:\CH^{d-1}(X,q,\bb Z/n\bb Z)\to \CH^{d-2}(X_k,q,\bb Z/n\bb Z),$$
$\dim X=d$, is an isomorphism for all $q\geq 0$. In fact, our key ingredient for proving the case $a=3$ is a recent result of Koll\'ar-Tian \cite[Thm. 8]{KollarTian} which implies the surjectivity of $res^{d-2,0}_{X,\bb Z/n}$ (see Theorem \ref{theorem_Kollar_Tian} below).
\end{enumerate}
\end{remark}

\paragraph{Acknowledgement.} The author acknowledges support by the Deutsche Forschungsgemeinschaft (DFG, German Research Foundation) through the Collaborative Research Centre TRR 326 \textit{Geometry and Arithmetic of Uniformized Structures}, project number 444845124. The work was started while the author was working at Leibniz Universität Hannover.

\section{Kato complexes}
Let $X$ be an excellent scheme, $n\in \bb Z-\{0\},i\in \bb Z$ and assume that for any prime divisor $p$ of $n$ and for any $x\in X_{0}$ such that char$(k(x))=p$, we have $[k(x):k(x)^p]\geq p^i$. In \cite{Ka86}, Kato constructs complexes
\begin{multline*}
C^i_n(X): ...\to \bigoplus_{x\in X_a}H^{i+a+1}(k(x),\bb Z/n(i+a))\to ...\to 
\bigoplus_{x\in X_1}H^{i+2}(k(x),\bb Z/n(i+1))\\ \to \bigoplus_{x\in X_0}H^{i+1}(k(x),\bb Z/n(i))
\end{multline*}
in which $\bb Z/n\bb Z(a):=\mu_n^{\otimes a}$ if $n$ is invertible on $X$ and $\bb Z/n\bb Z(a):=W_r\Omega^a_{X,\log}[-a]\oplus \mu_m^{\otimes a}$ if $X$ is a smooth scheme over a field of characteristic $p>0$ and $n=mp^r$ with $p \nmid m, r\geq 0$. In particular, the complexes are always defined in $n$ is invertible on $X$.
Again the term $\oplus_{x\in X_a}H^{i+a+1}(k(x),\bb Z/n(i+a))$ is placed in degree $a$ and the homology of $C^i_n(X)$ in degree $a$ is denoted by $KH^{(i)}_a(X,\bb Z/n\bb Z)$. 
It is shown in \cite{JSS2014}, that these complexes coincide up to sign with the complexes arising from the appropriate homology theories via the niveau spectral sequence. When working in the Zariski topology, we let $\bb Z/n\bb Z(a)$ denote the motivic complex with finite coefficients as defined by Bloch's cycle.



\begin{proposition}\label{proposition_long_exact_sequence}
\begin{enumerate} Assume that we are in one of the following situations:
\item $X$ is a regular scheme over an algebraically closed field $k$ with $\dim X=d$. 
\item $A$ is a henselian discrete valuation ring with an algebraically closed residue field $k$ and $X$ is a regular scheme flat over $S=\Spec A$ with $\dim X=d$.  
\end{enumerate}
Let $\epsilon:X_{\et}\to X_{\rm Zar}$ be the natural change of sites. Then there is a distinguished triangle
$$\tau^{\geq d}R\epsilon_*\bb Z/n\bb Z(d-1)\to \bb Z/n\bb Z(d-1)\to R\epsilon_*\bb Z/n\bb Z(d-1)$$
in $D(X_{\rm Zar},\bb Z/n\bb Z)$ with $\tau^{\geq d}R\epsilon_*\bb Z/n\bb Z(d-1)\cong C_n^{-1}(X)$. In particular, there is a long exact sequence
$$...\to KH_{4}^{(-1)}(X)\to \CH^{d-1}(X,1;\bb Z/n\bb Z)\to H^{2d-3}_{\et}(X,\mu_n^{\otimes d-1})\to KH_{3}^{(-1)}(X)\to \CH^{d-1}(X)/n\to $$
$$H^{2d-2}_{\et}(X,\mu_n^{\otimes d-1})\to KH_{2}^{(-1)}(X)\to \CH^{d-1}(X,-1;\bb Z/n\bb Z)=0 \to H^{2d-1}_{\et}(X,\mu_n^{\otimes d-1})\xto{\cong} KH_1^{(-1)}(X)\to $$
$$\CH^{d-1}(X,-2;\bb Z/n\bb Z)=0 \to H^{2d}_{\et}(X,\mu_n^{\otimes d-1})\xto{\cong} KH_0^{(-1)}(X)\to 0.$$
\end{proposition}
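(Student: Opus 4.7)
The plan is to identify both outer vertices of the claimed triangle. The middle term $\bb Z/n\bb Z(d-1)$ will be recognised as the Zariski truncation $\tau^{\leq d-1}R\epsilon_*\mu_n^{\otimes d-1}$ via Beilinson--Lichtenbaum, while the third vertex is identified (up to the appropriate shift) with the Kato complex $C_n^{-1}(X)$ via the Bloch--Ogus Gersten resolution. The stated long exact sequence is then the sequence on Zariski hypercohomology associated to the triangle.

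\textbf{Step 1 (Beilinson--Lichtenbaum).} In situation (i), Voevodsky's theorem (Beilinson--Lichtenbaum for smooth schemes over a field, with $n$ invertible) provides a canonical quasi-isomorphism $\bb Z/n\bb Z(m) \simeq \tau^{\leq m}R\epsilon_*\mu_n^{\otimes m}$ under which the natural map $\bb Z/n\bb Z(m) \to R\epsilon_*\mu_n^{\otimes m}$ is identified with the truncation map. In situation (ii), the analogous statement for regular schemes flat over a Dedekind ring (still $n$ invertible) is provided by Geisser's extension of Beilinson--Lichtenbaum. Taking $m = d-1$ and rotating the canonical truncation triangle $\tau^{\leq d-1}K \to K \to \tau^{\geq d}K$ applied to $K = R\epsilon_*\mu_n^{\otimes d-1}$ yields the distinguished triangle of the proposition.

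\textbf{Step 2 (Bloch--Ogus / Kato).} In both situations the stalk of $\mathcal{H}^q := R^q\epsilon_*\mu_n^{\otimes d-1}$ at a point $x$ of codimension $c$ is $H^q_\et(\Spec \OO_{X,x}^{\rm sh},\mu_n^{\otimes d-1})$, which vanishes for $q > c$ by Gabber's affine Lefschetz theorem; in particular $\mathcal{H}^q = 0$ for $q > d$, so $\tau^{\geq d}R\epsilon_*\mu_n^{\otimes d-1} \simeq \mathcal{H}^d[-d]$. The Bloch--Ogus theorem furnishes a Gersten resolution of $\mathcal{H}^d$ whose term in horizontal degree $a$ is $\bigoplus_{x \in X_{d-a}} i_{x*}H^{d-a}(k(x),\mu_n^{\otimes d-1-a})$; after the reindexing $b = d - a$ from codimension to dimension, this is exactly $C_n^{-1}(X)$, placed in the cohomological degrees matching $\tau^{\geq d}R\epsilon_*\mu_n^{\otimes d-1}$. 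Agreement of the Bloch--Ogus differentials with Kato's explicit residue maps is the content of \cite{JSS2014}, which I cite as a black box.

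\textbf{Step 3 (Long exact sequence).} Applying $R\Gamma(X_{\rm Zar},-)$ to the triangle and using the identifications
\[
H^a(X_{\rm Zar},\bb Z/n\bb Z(d-1)) \cong \CH^{d-1}(X,2d-2-a;\bb Z/n\bb Z), \quad H^a(X_{\rm Zar},R\epsilon_*\mu_n^{\otimes d-1}) \cong H^a_\et(X,\mu_n^{\otimes d-1}),
\]
together with the fact that $C_n^{-1}(X)$ is a bounded complex of skyscraper sheaves, so that its Zariski hypercohomology coincides with its homology and produces $KH^{(-1)}_{2d-i}(X)$ in cohomological degree $i$, yields the six-term piece of the displayed sequence. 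The vanishings $\CH^{d-1}(X,q;\bb Z/n\bb Z) = 0$ for $q<0$ (Bloch's cycle complex is zero in negative weight) collapse the last two connecting maps into the isomorphisms $H^{2d-1}_\et \cong KH^{(-1)}_1$ and $H^{2d}_\et \cong KH^{(-1)}_0$ at the tail. The one genuinely delicate input is Beilinson--Lichtenbaum in the mixed-characteristic relative setting of (ii); the remainder is routine bookkeeping of indices.
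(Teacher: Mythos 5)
Your route is genuinely different from the paper's. You prove the triangle directly: identify $\bb Z/n\bb Z(d-1)$ with $\tau^{\leq d-1}R\epsilon_*\mu_n^{\otimes d-1}$ via Beilinson--Lichtenbaum as an equivalence of complexes of Zariski sheaves, kill $R^q\epsilon_*$ for $q>d$, and resolve $\mathcal H^d=R^d\epsilon_*\mu_n^{\otimes d-1}$ by its Gersten--Cousin complex to recognise $C_n^{-1}(X)$. The paper instead never touches the sheaf-level statement: it compares Bloch's niveau spectral sequence $E^{p,q}_1=\bigoplus_{x\in X^{(p)}}\CH^{d-1-p}(k(x),-p-q;\bb Z/n\bb Z)$ with the coniveau spectral sequence for $H^*_{\et}(X,\mu_n^{\otimes d-1})$, applying Beilinson--Lichtenbaum only to the residue fields $k(x)$ and using the cohomological-dimension bounds of Tate and of SGA4 X.2.3/Gabber to show $E_1^{\bullet,\geq d+1}=0$; the row $q=d$ is then the Kato complex by \cite{JSS2014}. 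Your version is cleaner in case (i) and proves the triangle as literally stated, but it front-loads two inputs that the paper's argument deliberately avoids.

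Those two inputs are where the gaps are, and both concern case (ii). First, Beilinson--Lichtenbaum in the form $\bb Z/n\bb Z(m)\simeq\tau^{\leq m}R\epsilon_*\mu_n^{\otimes m}$ for $X$ merely \emph{regular and flat} over a henselian discrete valuation ring is a substantially stronger statement than the field case; Geisser's results are formulated for (essentially) smooth schemes over a Dedekind base, and the special fiber here need not be smooth. Second, the exactness of the Cousin complex resolving $\mathcal H^d$ is Bloch--Ogus--Gabber only for smooth varieties over a field; in the mixed-characteristic setting of (ii) the Gersten conjecture for étale cohomology is not something you can cite as a black box under that name, and \cite{JSS2014} gives you the identification of the $E_1$-terms and differentials of the niveau/coniveau spectral sequence with Kato's complex, not a flasque resolution of $\mathcal H^d$. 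There is also a local slip in Step 2: the Zariski stalk of $R^q\epsilon_*\mu_n^{\otimes d-1}$ at $x$ is $H^q_{\et}(\Spec\OO_{X,x},\mu_n^{\otimes d-1})$, not the cohomology of the strict henselisation (which would vanish for all $q>0$), and it does not vanish for $q>\codim(x)$ --- at the generic point it is nonzero up to degree $d$. The vanishing you actually need, $\mathcal H^q=0$ for $q>d$, is true, but it comes from the bounds $cd(k(y))\leq d-\codim(y)$ (Tate) in case (i) and the henselian local ring bound in case (ii), i.e. exactly the cohomological-dimension facts the paper lists, not from an affine Lefschetz theorem. If you rework case (ii) through the spectral-sequence comparison (or restrict your derived-category argument to case (i)), the rest of your bookkeeping in Step 3 is correct.
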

\begin{proof}
We compare the the spectral sequence 
\begin{equation}\label{niveauspectralseq}
E^{p,q}_1=\bigoplus_{x\in X^{(p)}}CH^{r-p}(\text{Spec}k(x),-p-q;\bb Z/n\bb Z)\Rightarrow CH^r(X,-p-q;\bb Z/n\bb Z)
\end{equation} 
(see \cite[Sec. 10]{Bl86}) with the coniveau spectral sequence
\begin{equation}\label{coniveau_spseq}
E_1^{p,q}(X,\bb Z/n\bb Z(j))=\bigoplus_{x\in X^p}H^{q-p}(k(x),\bb Z/n\bb Z(j-p))\Rightarrow H^{p+q}_{\et}(X,\bb Z/n\bb Z(j))
\end{equation} 
for $r=j=d-1$. The difference is given by the complex $C_n^{-1}(X)$.
In order to see this, we use the following facts: 
\begin{itemize}
\item[(1)] $CH^{q}(\text{Spec}(k(x)),2q-p;\bb Z/n\bb Z)\cong H^p(X,\bb Z/n(q))\cong H^p_{\et}(X,\bb Z/n(q))$, where the second isomorphism holds for $p\leq q$ by the Beilinson--Lichtenbaum conjecture.
\item[(2)] $CH^{p}(\text{Spec}(k(x)),q;\bb Z/n\bb Z)$ for $p>q$ since by definition the group is generated by codimension $p$ cycles on $\Delta^q_{\Spec k(x)}$.
\item[(3)] $E_1^{\bullet,\geq d+1}=0$ follows from the following two facts about the cohomological dimension of fields:\begin{itemize}
\item  (Tate) If $F$ is a field and $K/F$ a field extension of transcendence degree $d$, then $cd(K)\leq cd(F)+d$ \cite[X, Thm. 2.1]{SGA4}.
\item Let $R$ be an integral excellent henselian local ring of dimension $d$ with residue field $F$ and fraction field $K$. Then $cd_\ell(K)\leq cd_\ell(F)+d$ for any prime number $\ell \neq ch(k)$ \cite[X, Thm. 2.3]{SGA4}, \cite[Cor. 8, p.5]{TraveauxDeGabber2014}.
\end{itemize}
\end{itemize}
\end{proof}

\begin{proposition}\label{proposition_long_exact_sequence2}
Let $A$ is a henselian discrete valuation ring with algebraically closed residue field $k$ and $X$ is a regular scheme flat over $S=\Spec A$ with $\dim X=d$. 
Let $\epsilon:X_{\et}\to X_{\rm Zar}$ be the natural change of sites. Assume that Conjecture \ref{conjecture_Kato}(2) holds.  Then there is a distinguished triangle
$$\tau^{\geq d-1}R\epsilon_*\bb Z/n\bb Z(d-2)\to \bb Z/n\bb Z(d-2)\to R\epsilon_*\bb Z/n\bb Z(d-2)$$
in $D(X_{\rm Zar},\bb Z/n\bb Z)$ with $\tau^{\geq d-1}R\epsilon_*\bb Z/n\bb Z(d-2)\cong C_n^{-2}(X)$. In particular, there is a long exact sequence
$$...\to KH_{5}^{(-2)}(X)\to \CH^{d-2}(X,1;\bb Z/n\bb Z)\to H^{2d-5}_{\et}(X,\mu_n^{\otimes d-2})\to KH_{4}^{(-2)}(X)\to \CH^{d-2}(X)/n\to $$
$$H^{2d-4}_{\et}(X,\mu_n^{\otimes d-2})\to KH_{3}^{(-2)}(X)\to \CH^{d-2}(X,-1;\bb Z/n\bb Z)=0 \to H^{2d-3}_{\et}(X,\mu_n^{\otimes d-2})\xto{\cong} KH_2^{(-2)}(X)\to $$
$$\CH^{d-2}(X,-2;\bb Z/n\bb Z)=0 \to H^{2d-2}_{\et}(X,\mu_n^{\otimes d-2})\xto{\cong} KH_1^{(-2)}(X)\to $$
$$\CH^{d-2}(X,-3;\bb Z/n\bb Z)=0 \to H^{2d-1}_{\et}(X,\mu_n^{\otimes d-2})\xto{\cong} KH_0^{(-2)}(X)\to 0.$$
\end{proposition}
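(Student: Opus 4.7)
The plan is to follow, essentially verbatim, the strategy of the proof of Proposition~\ref{proposition_long_exact_sequence}: compare the niveau spectral sequence (\ref{niveauspectralseq}) for Bloch's higher Chow groups with the coniveau spectral sequence (\ref{coniveau_spseq}) for \'etale cohomology, this time at $r = j = d - 2$. The three facts (1)--(3) recalled in that earlier proof apply without change: Beilinson--Lichtenbaum identifies residue-field motivic and \'etale cohomology on the rows $q \leq d - 2$; the motivic niveau $E_1$-page vanishes for $q > d - 2$ by the negativity of the simplicial index in Bloch's definition; and the cohomological dimension bounds of Tate and Gabber force $E_1^{\bullet, q} = 0$ for $q \geq d + 1$.

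The new feature here is that the ``excess'' of the \'etale coniveau $E_1$-page over the motivic niveau $E_1$-page now consists of \emph{two} rows rather than one. Row $q = d - 1$ has in column $p = d - a$ the term $\bigoplus_{x \in X_a} H^{a - 1}(k(x), \mu_n^{\otimes a - 2})$; together with the $d_1$-differentials these assemble into the Kato complex $C_n^{-2}(X)$. Row $q = d$ has in the same column $\bigoplus_{x \in X_a} H^{a}(k(x), \mu_n^{\otimes a - 2})$. Conjecture~\ref{conjecture_Kato}(2) is introduced precisely to eliminate the contribution of this second row, and this is the main obstacle of the argument.

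The reduction goes as follows. Since $n \in k^\times$ and $A$ is henselian, Hensel's lemma applied to $X^n - 1$ puts $\mu_n$ inside $A^\times$ and hence inside every residue field $k(x)$; a choice of primitive $n$-th root of unity then identifies the Galois modules $\mu_n^{\otimes a - 2}$ and $\mu_n^{\otimes a - 1}$ over each $k(x)$, producing an isomorphism of complexes between row $q = d$ and the Kato complex $C_n^{-1}(X)$. Conjecture~\ref{conjecture_Kato}(2) says exactly that the latter has vanishing homology in every degree, so only row $q = d - 1$ contributes to the cone of the natural map $\bb Z/n(d-2) \to R\epsilon_* \bb Z/n(d-2)$. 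Beilinson--Lichtenbaum then gives the quasi-isomorphism $\tau^{\geq d - 1} R\epsilon_* \bb Z/n(d-2) \cong C_n^{-2}(X)$ in $D(X_{\rm Zar}, \bb Z/n)$, and hence the desired distinguished triangle.

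Taking hypercohomology of this triangle, translating $H^i_{\rm Zar}(X, \bb Z/n(d-2))$ into higher Chow groups via Bloch's identification $H^i_{\rm Zar}(X, \bb Z/n(j)) = \CH^j(X, 2j - i; \bb Z/n)$, and noting that $\CH^{d-2}(X, m; \bb Z/n) = 0$ for $m < 0$ by construction of the cycle complex, yields the displayed long exact sequence; the isomorphisms in the tail come from the vanishing of neighbouring $\CH^{d-2}(X, m; \bb Z/n)$ terms.
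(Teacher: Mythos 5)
Your proposal is correct and follows essentially the same route as the paper: the paper's proof is a one-line reference to the argument of Proposition~\ref{proposition_long_exact_sequence} with $r=j=d-2$, plus the observation that, after trivialising roots of unity, the extra row $E_1^{\bullet,d}$ is isomorphic to $C_n^{-1}(X)$ and is therefore killed by Conjecture~\ref{conjecture_Kato}(2). You have simply spelled out those same steps in more detail, including the correct identification of the twists in the two excess rows.
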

\begin{proof}
This follows by the same arguments as in the proof of Proposition \ref{proposition_long_exact_sequence} from the above spectral sequences for $r=j=d-2$ and the fact that, trivialising roots of unity, $E_1^{\bullet,d}(X)\cong C_n^{(-1)}(X)$.
\end{proof}

\section{Some facts from complex geometry}
We begin by recalling some important results about the cohomology of complex algebraic varieties which are due to Colliot-Th\'el\`ene-Voisin \cite{CTVoisin2012}.
\begin{notat} If $X$ is a smooth variety over $\bb C$, then we write $X(\bb C)$ for $X$ regarded as a topological space with the complex topology and $H^p(X(\bb C),\Lambda)$ for its singular cohomology with coefficients in an abelian group $\Lambda$. If $\Lambda$ is a finite abelian group, then $H^p_{\et}(X,\Lambda)\cong H^p(X(\bb C),\Lambda)$. \end{notat}

\begin{definition}
Let $X$ be an algebraic variety over $\bb C$. We define the $i$-th unramified cohomology group with coefficients in $A$ by $H^i_{nr}(X,A):=H^0(X,\cal H^i_X(A))$ with $\cal H^i_X(A)$ the Zariski sheaf on $X$ associated to the Zariski-presheaf $U\mapsto H^i(U(\bb C),A)$.
\end{definition}

\begin{theorem}(\cite[Thm. 3.1]{CTVoisin2012})
Let $X$ be an algebraic variety over $\bb C$. For every $i\in \bb Z$ and $n>0$ there is a short exact sequence of Zariski sheaves
$$0\to \cal H^p_X(\bb Z(i))\xto{\times n}\cal H^p_X(\bb Z(i))\xto{} \cal H^p_X(\mu_n^{\otimes i})\to 0$$
on $X$. In particular, the sheaves $\cal H^p_X(\bb Z(i))$ are torsion free and so are their global sections $H^0(X,\cal H^p_X(\bb Z(i)))=H_{nr}^p(X,\bb Z(i))$.
\end{theorem}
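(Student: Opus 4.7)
The plan is to derive the short exact sequence by passing to the long exact sequence in singular cohomology and Zariski-sheafifying. For each Zariski open $U \subseteq X$, the short exact sequence of sheaves of abelian groups $0 \to \bb Z(i) \xto{\times n} \bb Z(i) \to \mu_n^{\otimes i} \to 0$ on $X(\bb C)$ yields a long exact sequence
$$\cdots \to H^{p-1}(U,\mu_n^{\otimes i}) \xto{\beta} H^p(U,\bb Z(i)) \xto{\times n} H^p(U,\bb Z(i)) \to H^p(U,\mu_n^{\otimes i}) \to \cdots$$
Zariski sheafification is exact on sequences of presheaves, so the analogous long exact sequence of Zariski sheaves on $X$ holds. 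The asserted short exactness is therefore equivalent to vanishing of the Bockstein map $\beta : \cal H^{p-1}_X(\mu_n^{\otimes i}) \to \cal H^p_X(\bb Z(i))$ as a map of Zariski sheaves, which is in turn equivalent to torsion-freeness of each $\cal H^p_X(\bb Z(i))$.

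For torsion-freeness, I would invoke the Bloch--Ogus theorem in its form for Betti cohomology. Working locally on the smooth locus (the singular locus is a proper closed subset and the question is Zariski-local), this provides a Gersten-type resolution whose leftmost non-trivial term embeds $\cal H^p_X(\bb Z(i))$ into the pushforward from the generic point $\eta$ of the group $H^p(\eta,\bb Z(i)) := \varinjlim_{U \subseteq X} H^p(U(\bb C),\bb Z(i))$. Thus it suffices to show that every $n$-torsion class $\alpha \in H^p(U,\bb Z(i))$ dies on restriction to some smaller Zariski open $V \subseteq U$. Writing $\alpha = \beta(\bar\alpha)$ with $\bar\alpha \in H^{p-1}(U,\mu_n^{\otimes i})$, the desired vanishing follows once $\bar\alpha|_V$ has been lifted to an integral class in $H^{p-1}(V,\bb Z(i))$ after suitable shrinking.

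The key remaining input is the Zariski-local surjectivity of reduction modulo $n$, from integral to finite-coefficient Betti cohomology sheaves. I would derive this from the Beilinson--Lichtenbaum / norm residue theorem (Voevodsky--Rost): both $\cal H^{p-1}_X(\bb Z(i))$ and $\cal H^{p-1}_X(\mu_n^{\otimes i})$ are identified in the relevant range with Zariski sheafifications of motivic cohomology presheaves, and at the level of cycle complexes every representative admits an integral lift after shrinking. The principal obstacle is treating all weights $i\in\bb Z$ uniformly: for $i \geq 0$ the Beilinson--Lichtenbaum framework applies directly, while for $i \leq 0$ (where $\mu_n^{\otimes i}$ is still the constant sheaf $\bb Z/n$ over $\bb C$) one must instead argue via the coniveau filtration, using that Betti torsion classes are supported on proper Zariski closed subvarieties which can be excised by further shrinking $U$. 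Assembling these two inputs so as to yield a uniform statement is the most delicate step; the rest of the proof is formal.
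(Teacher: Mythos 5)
First, a point of reference: the paper does not prove this statement at all --- it is quoted from Colliot-Th\'el\`ene--Voisin \cite[Thm. 3.1]{CTVoisin2012} --- so your proposal can only be compared with the argument in the cited source. Your overall strategy (reduce short exactness to torsion-freeness of the sheaves $\cal H^p_X(\bb Z(i))$, embed them into the generic stalk via Bloch--Ogus, and kill the Bockstein by local surjectivity of reduction mod $n$, resting ultimately on the norm residue theorem) is exactly the strategy of that proof. One caveat before the substantive issues: Bloch--Ogus requires $X$ smooth, which is also the hypothesis in \cite{CTVoisin2012} (the word ``smooth'' has been dropped in the statement above), and your attempt to repair this by ``working locally on the smooth locus'' does not work, since exactness of a sequence of Zariski sheaves on $X$ is not detected on a dense open subset.

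There are two genuine gaps. First, the step you single out as the most delicate --- treating all weights $i\in\bb Z$, with a separate coniveau argument for $i\le 0$ --- is a non-issue, and the argument you propose for it is circular: the claim that torsion classes in Betti cohomology ``are supported on proper Zariski closed subvarieties which can be excised'' is precisely the torsion-freeness of $\cal H^p_X(\bb Z(i))$ you are trying to prove. Over $\bb C$ the sheaves $\bb Z(i)=(2\pi\sqrt{-1})^{i}\,\bb Z$ and $\mu_n^{\otimes i}$ are constant and non-canonically independent of $i$, so the whole statement reduces to a single weight, which one takes equal to the cohomological degree so that Beilinson--Lichtenbaum applies. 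Second, and more seriously, the key surjectivity of $\cal H^{p-1}_X(\bb Z(i))\to\cal H^{p-1}_X(\mu_n^{\otimes i})$ is not established by the remark that ``at the level of cycle complexes every representative admits an integral lift after shrinking'': a mod-$n$ cocycle lifts to an integral cochain whose boundary is divisible by $n$, and the obstruction to correcting it to an integral cocycle is exactly the Bockstein you are trying to kill, so this too is circular. The substantive input, which your sketch is missing, is local generation by symbols: by Gersten and the norm residue theorem one has $H^{p-1}(F,\mu_n^{\otimes(p-1)})\cong K^M_{p-1}(F)/n$ for the residue fields involved, so $\cal H^{p-1}_X(\mu_n^{\otimes(p-1)})$ is Zariski-locally generated by symbols $(u_1)\cup\dots\cup(u_{p-1})$ with the $u_j$ invertible regular functions, and each such symbol is visibly the reduction mod $n$ of the integral Betti class $[u_1]\cup\dots\cup[u_{p-1}]$, where $[u_j]\in H^1(V(\bb C),\bb Z(1))$ is the pullback of the generator of $H^1(\bb C^{\times},\bb Z(1))$. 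With that step supplied (and smoothness restored), the rest of your outline is correct.
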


\begin{proposition}(\cite[Prop. 3.3]{CTVoisin2012})\label{proposition_CTV_action_corresp}
Let $X$ be a smooth projective connected variety over $\bb C$ of dimension $d$. Let $A$ be an abelian group.
\begin{enumerate}
\item If the group $\CH_0(X)$ is supported on a closed subscheme $Y\subset X$ with $\dim Y=r$, then $H^0(X,\cal H^p_X(A))$ is annihilated by an integer $N\neq 0$ for $p>r$. In particular $H^0(X,\cal H^p_X(\bb Z))=0$ for $p>r$.
\item  If the group $\CH_0(X)$ is supported on a closed subscheme $Y\subset X$ with $\codim Y=r$, then $H^p(X,\cal H^d_X(A))$ is annihilated by an integer $N\neq 0$ for $p<r$.
\item If $X=\bb P^d_{\bb C}$, then $H^p(X,\cal H^q_X(A))=0$ for $p\neq q$ and $H^p(X,\cal H^p_X(A))=A$ for $p\leq d$.
\end{enumerate}
\end{proposition}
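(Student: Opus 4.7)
The plan is to derive all three parts from two ingredients: the Bloch--Srinivas decomposition of the diagonal, and the standard action of algebraic correspondences in $\CH^d(X\times X)$ on the Zariski cohomology of the Bloch--Ogus sheaves $\mathcal{H}^\bullet_X(A)$ (pullback, cup with cycle class, Gysin pushforward). Under the hypothesis that $\CH_0(X_\Omega)$ is supported on $Y\subset X$, for $\Omega$ a universal domain, Bloch--Srinivas supplies an integer $N\neq 0$ and an equality
$$N\cdot\Delta_X \;=\; Z_1 + Z_2 \;\in\; \CH^d(X\times X),$$
with $|Z_1|\subset X\times Y$ and $|Z_2|\subset T\times X$ for some proper closed $T\subsetneq X$, which after thickening may be taken to be a divisor. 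Since $(\Delta_X)_\ast$ acts as the identity on any $H^p(X,\mathcal{H}^q_X(A))$, it suffices to control each of $(Z_1)_\ast$ and $(Z_2)_\ast$ separately.

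For part (i), fix $p>r$ and $\alpha\in H^0(X,\mathcal{H}^p_X(A))$. By the projection formula, $(Z_1)_\ast\alpha$ factors through the Zariski cohomology of $Y$ in Bloch--Ogus degree $p$. But the singular cohomology of the complex points of the $r$-dimensional variety $Y$ vanishes above degree $2r$, so the Bloch--Ogus Gersten resolution forces $\mathcal{H}^p_Y(A)=0$ for $p>r$, killing this piece. For $(Z_2)_\ast$ one exploits that $H^0(-,\mathcal{H}^p(A))$ is a birational invariant on smooth projective varieties: the correspondence factors through a contribution supported on the proper closed $T$, which is handled by Noetherian induction on $\dim X$ (or, after a blow-up, by using that the generic point of $X$ is disjoint from $T$). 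Hence $N\alpha=0$; the integral statement then follows from the torsion-freeness of $\mathcal{H}^p_X(\bb Z)$ recorded in the preceding theorem.

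For part (ii), the roles are reversed. Fix $p<r$ and $\alpha\in H^p(X,\mathcal{H}^d_X(A))$. The piece $(Z_2)_\ast\alpha$ now factors through $H^p(T,\mathcal{H}^d_T(A))$ with $\dim T=d-1$, hence vanishes because $\mathcal{H}^d_T(A)=0$. For $(Z_1)_\ast\alpha$, the Gysin pushforward associated with $Y\hookrightarrow X$ of codimension $r$ factors the map through a group of shape $H^{p-r}(Y,\mathcal{H}^{d-r}_Y(A))$; for $p<r$ the Zariski degree is negative, so this group vanishes. Hence $N\alpha=0$.

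For part (iii), the correspondence machinery is unnecessary. I would argue by the projective bundle formula for Bloch--Ogus cohomology applied inductively along $\bb P^d\to\bb P^{d-1}\to\cdots\to\Spec\bb C$, or equivalently by the Bloch--Ogus/Leray spectral sequence
$$E_2^{p,q}=H^p(X,\mathcal{H}^q_X(A))\;\Rightarrow\;H^{p+q}(X(\bb C),A),$$
which degenerates on a cellular variety and whose $E_2$-entries are concentrated on the diagonal by the Gersten computation on affine space. This gives $H^p(\bb P^d,\mathcal{H}^q_X(A))=0$ for $p\neq q$ and $=A$ for $p=q\leq d$. The main obstacle in executing this plan is, for parts (i) and (ii), the careful bookkeeping of degrees under Gysin and the verification that the correspondence action of $\CH^d(X\times X)$ is compatible with the Bloch--Ogus structure on $\mathcal{H}^\bullet_X(A)$; this is standard but technical, and is the geometric heart of the Bloch--Srinivas/Colliot-Thélène--Voisin method.
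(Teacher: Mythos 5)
The paper gives no proof of this proposition --- it is quoted directly from Colliot-Th\'el\`ene--Voisin --- and your reconstruction via the Bloch--Srinivas decomposition $N\Delta_X=Z_1+Z_2$ acting as correspondences on the groups $H^p(X,\mathcal H^q_X(A))$, together with the projective bundle formula for $\bb P^d$, is exactly the argument of the cited source, so the approach is the same and the outline is correct. Two points to tighten: the vanishing $\mathcal H^p_{\tilde Y}(A)=0$ for $p>\dim \tilde Y=r$ comes from Andreotti--Frankel/Artin vanishing on affine opens (degree $>r$, not the $>2r$ bound you quote) fed into the Gersten resolution of a \emph{resolution} $\tilde Y\to Y$, and in part (i) you should fix one variance of the correspondence action so that the piece supported over $Y$ dies by this sheaf vanishing while the piece supported over the divisor $T$ dies by the injectivity of $H^0(X,\mathcal H^p_X(A))\to \mathcal H^p_{X,\eta}$ (restriction to the generic point), rather than appealing to birational invariance or Noetherian induction, which are not needed.
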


\begin{proposition}(\cite[Prop. 3.4]{CTVoisin2012})\label{proposition_birational_invariants}
For every abelian group $A$ and every $i\geq 0$ the groups $H^i(X,\cal H^d_X(A))$ are birational invariants for smooth connexted projective varieties of dimension $d$ over $\Spec(\bb C)$.
\end{proposition}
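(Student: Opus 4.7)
The natural approach is through weak factorization. By the weak factorization theorem of Abramovich--Karu--Matsuki--W\l odarczyk, any birational map between smooth projective varieties over $\bb C$ factors as a sequence of blow-ups and blow-downs along smooth centers. So it suffices to show that for a blow-up $\pi \colon \tilde X \to X$ of a smooth projective variety of dimension $d$ along a smooth closed subvariety $Z$ of codimension $c \geq 2$, the pullback
$$\pi^* \colon H^i(X, \cal H^d_X(A)) \To H^i(\tilde X, \cal H^d_{\tilde X}(A))$$
is an isomorphism for every $i \geq 0$ and every abelian group $A$. The exceptional divisor $E = \pi^{-1}(Z)$ is a projective bundle $\bb P(N_{Z/X})$ of relative dimension $c-1$ over $Z$.

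The plan is to analyse $\pi^*$ via the Leray spectral sequence
$$E_2^{p,q} = H^p(X, R^q\pi_* \cal H^d_{\tilde X}(A)) \Rightarrow H^{p+q}(\tilde X, \cal H^d_{\tilde X}(A)),$$
and to prove the two vanishing/identification statements
$$\pi_* \cal H^d_{\tilde X}(A) \cong \cal H^d_X(A), \qquad R^q \pi_* \cal H^d_{\tilde X}(A) = 0 \text{ for } q > 0.$$
The first identity is local on $X$; away from $Z$ it is trivial since $\pi$ is an isomorphism there, and along $Z$ it reduces, via the Gersten resolution of $\cal H^d$ (Bloch--Ogus), to the fact that the top-weight term $\oplus_{x \in X^{(0)}} H^d(k(x), A)$ depends only on the function field $k(X) = k(\tilde X)$, while the lower-weight correction terms patch correctly under proper birational pushforward.

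The serious point is the vanishing of $R^q\pi_* \cal H^d_{\tilde X}(A)$ for $q > 0$, and this is where I expect the real work to lie. The question is local on $X$, so I would pass to a henselization at a point of $Z$ and thereby replace $\pi$ by its model version, the blow-up of $\bb A^d$ along $\bb A^{d-c}$ with fibres $\bb P^{c-1}$. Using the localization/Gysin triangle for the closed embedding $E \hookrightarrow \tilde X$ together with the projective bundle structure of $E \to Z$, the higher direct images can be computed fibrewise in terms of $H^p(\bb P^{c-1}, \cal H^q_{\bb P^{c-1}}(A))$ for various $q \leq d$. Proposition \ref{proposition_CTV_action_corresp}(iii) then delivers the required vanishing: these groups are $A$ for $p = q \leq c-1$ and zero otherwise, and the surviving diagonal contributions match exactly the corresponding terms already present in $H^*(X, \cal H^d_X(A))$ via the Gersten description, producing no new cohomology on $\tilde X$. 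Once the two bullet points above are established, the Leray spectral sequence collapses to an edge isomorphism $\pi^*$, completing the proof.
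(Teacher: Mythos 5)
The paper does not actually prove this proposition: it is quoted verbatim from Colliot-Th\'el\`ene--Voisin, where it is established via the action of correspondences on the groups $H^i(-,\cal H^d)$. For a birational map with graph closure $\Gamma\subset X\times Y$ one checks that $\Gamma_*$ and $\Gamma^t_*$ are mutually inverse, the key point being that a correspondence whose support fails to dominate one of the factors acts by zero, because the sheaf $\cal H^d$ vanishes identically on any smooth variety of dimension $<d$ (Gersten resolution plus $cd(k(W))\leq \dim W$). Your weak-factorization route is a genuinely different strategy and could in principle be made to work, but as written it has a gap at exactly the point where the hypothesis $q=d=\dim X$ must enter.

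Concretely: the statement is false for $\cal H^q$ with $q<d$ (for instance $H^1(X,\cal H^1(\bb Z))$ contains the N\'eron--Severi group and grows under blow-up), so any correct proof must isolate what is special about top degree, and yours does not. Your mechanism for the vanishing of $R^q\pi_*\cal H^d_{\tilde X}$ --- that the diagonal classes $H^p(\bb P^{c-1},\cal H^p)=A$ ``match exactly the corresponding terms already present in $H^*(X,\cal H^d_X)$, producing no new cohomology'' --- is not a valid argument: if those classes survived they would contribute \emph{extra} direct summands, as they do for $q<d$ where the blow-up formula reads $H^p(\tilde X,\cal H^q)\cong H^p(X,\cal H^q)\oplus\bigoplus_{j=1}^{c-1}H^{p-j}(Z,\cal H^{q-j})$; they would not cancel against existing ones. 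The correct reason the exceptional divisor contributes nothing in top degree is that the correction terms involve $\cal H^{d-j}_Z$ for $1\leq j\leq c-1$, and since $\dim Z=d-c<d-j$ these sheaves vanish identically, again by Gersten and the cohomological dimension of $k(Z)$. In addition, your ``fibrewise'' identification of $R^q\pi_*$ in terms of $H^p(\bb P^{c-1},\cal H^q_{\bb P^{c-1}})$ is unjustified: the restriction of $\cal H^d_{\tilde X}$ to a fibre is not $\cal H^q_{\bb P^{c-1}}$, and relating them requires the Gysin/projective-bundle machinery of Bloch--Ogus theory, i.e.\ essentially the blow-up formula you are trying to derive. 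With that formula granted the Leray spectral sequence is superfluous; without it, both of your bullet points remain unproved.
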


\begin{corollary}
Let $X$ be a smooth projective rational variety over $\bb C$. Then $KH^{(-1)}_a(X)=0$ for $a>0$.
\end{corollary}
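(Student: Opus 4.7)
My strategy is to reinterpret $KH^{(-1)}_a(X)$ as the Zariski cohomology of a sheaf of unramified type, and then use birational invariance to reduce the computation to $X=\bb P^d$, where everything is known.

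The first step is Bloch-Ogus. By the main result of \cite{JSS2014}, the Kato complex $C_n^{-1}(X)$ agrees (up to sign) with the $q=d$ row of the $E_1$ page of the coniveau spectral sequence (\ref{coniveau_spseq}) with coefficients $\mu_n^{\otimes d-1}$: indeed the contribution $H^{d-p}(k(x),\mu_n^{\otimes d-1-p})$ of $x\in X^p$ in bidegree $(p,d)$ matches the dimension-$a$ summand of $C_n^{-1}(X)$ under the reindexing $a=d-p$. The Bloch-Ogus theorem identifies the $E_2$-page with
\[
E_2^{p,d}=H^p\bigl(X,\mathcal{H}^d_X(\mu_n^{\otimes d-1})\bigr),
\]
so that
\[
KH^{(-1)}_a(X)\;\cong\;H^{d-a}\bigl(X,\mathcal{H}^d_X(\mu_n^{\otimes d-1})\bigr).
\]
Because $k=\bb C$, a choice of primitive $n$-th root of unity trivialises the twist and rewrites this as $H^{d-a}(X,\mathcal{H}^d_X(\bb Z/n))$.

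The second step uses the birational invariance already established in the excerpt. Since $X$ is smooth projective rational of dimension $d$, it is birational to $\bb P^d_{\bb C}$, hence Proposition \ref{proposition_birational_invariants} applied with $A=\bb Z/n$ yields
\[
H^{d-a}\bigl(X,\mathcal{H}^d_X(\bb Z/n)\bigr)\;\cong\;H^{d-a}\bigl(\bb P^d_{\bb C},\mathcal{H}^d_{\bb P^d_{\bb C}}(\bb Z/n)\bigr).
\]
By Proposition \ref{proposition_CTV_action_corresp}(iii), the right-hand side vanishes for $d-a\neq d$, i.e.\ for $a>0$, and equals $\bb Z/n$ when $a=0$. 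Chaining the two isomorphisms gives $KH^{(-1)}_a(X)=0$ for $a>0$, as required.

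The geometric content — rational connectivity via birationality with projective space — is entirely absorbed by Propositions \ref{proposition_birational_invariants} and \ref{proposition_CTV_action_corresp}, so the only delicate point is the bookkeeping of the first step, namely matching the Kato complex with the correct row and twist in the coniveau spectral sequence; this is of a routine nature and is supplied by the reference \cite{JSS2014}. Note also that the argument in fact computes the top-dimensional piece $KH^{(-1)}_0(X)=\bb Z/n$, recovering the first case of Conjecture \ref{conjecture_Kato}(1) for rational $X$ en passant.
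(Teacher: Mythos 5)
Your proposal is correct and follows essentially the same route as the paper: identify $KH^{(-1)}_a(X)$ with $H^{d-a}(X,\cal H^d_X(\mu_n^{\otimes d-1}))$ via the coniveau spectral sequence, compute this for $\bb P^d_{\bb C}$ using Proposition \ref{proposition_CTV_action_corresp}(iii), and transfer the result to $X$ by the birational invariance of Proposition \ref{proposition_birational_invariants}. The only difference is that you spell out the Bloch--Ogus identification and the trivialisation of the twist, which the paper leaves implicit.
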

\begin{proof}
We first note that the statement holds for $X=\bb P^d_{\bb C}$.
This follows immediately from Proposition \ref{proposition_CTV_action_corresp}(iii) for $A=\mu_n^{\otimes d-1}$. The corollary then follows from Proposition \ref{proposition_birational_invariants}.
\end{proof}


\begin{definition}
Let $X$ be a smooth projective connected variety over $\bb C$ of dimension $d$ and 
$$cyc^c:\CH^i(X)\to H^{2i}(X(\bb C),\bb Z(i))$$
the cycle class map. We denote the image of $cyc^c$ in $H^{2i}(X(\bb C),\bb Z(i))$ by $H^{2i}_{alg}(X(\bb C),\bb Z(i))$ and the classes of type $(i,i)$ inside of $H^{2i}(X(\bb C),\bb C(i))$ by $H^{i,i}(X)$. 
The preimage of $H^{i,i}(X)$ in $H^{2i}(X(\bb C),\bb Z(i))$ under the homomorphism 
$$H^{2i}(X(\bb C),\bb Z(i))\to H^{2i}(X(\bb C),\bb C(i)).$$
is denoted by $Hdg^{2i}(X,\bb Z)$. There is an inclusion
$H^{2i}_{alg}(X(\bb C),\bb Z(i)) \subset Hdg^{2i}(X,\bb Z)$
and we set $$Z^{2i}(X):=Hdg^{2i}(X,\bb Z)/H^{2i}_{alg}(X(\bb C),\bb Z(i)).$$ The integral Hodge conjecture (IHC) says that $Z^{2i}(X)=0$.
\end{definition}

\begin{proposition}(\cite[Prop. 3.7]{CTVoisin2012})\label{proposition_CTV3.7}
Let $X$ be a smooth projective variety over $\bb C$. Then there is a short exact sequence 
$$0\to H^3_{nr}(X,\bb Z(2))/n\to H^3_{nr}(X,\mu_n^{\otimes 2})\to Z^4(X)[n]\to 0.$$
\end{proposition}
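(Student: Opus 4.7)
The plan is to combine the short exact sequence of Zariski sheaves from \cite[Thm.~3.1]{CTVoisin2012} above (applied with $(p,i)=(3,2)$) with the Bloch--Ogus spectral sequence, identifying the cokernel of multiplication by $n$ on unramified cohomology with $Z^4(X)[n]$ via a careful analysis of the edge maps at $H^4(X(\bb C),\bb Z(2))$.

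First I would apply \cite[Thm.~3.1]{CTVoisin2012} to produce the short exact sequence of Zariski sheaves
$$0\to \cal H^3_X(\bb Z(2))\xto{\times n}\cal H^3_X(\bb Z(2))\to \cal H^3_X(\mu_n^{\otimes 2})\to 0.$$
Taking the associated long exact sequence in Zariski cohomology, and using the torsion-freeness of $H^3_{nr}(X,\bb Z(2))=H^0(X,\cal H^3_X(\bb Z(2)))$ asserted in the same theorem, the initial piece gives
$$0\to H^3_{nr}(X,\bb Z(2))/n\to H^3_{nr}(X,\mu_n^{\otimes 2})\to H^1(X,\cal H^3_X(\bb Z(2)))[n]\to 0.$$
It remains to produce a natural isomorphism $H^1(X,\cal H^3_X(\bb Z(2)))[n]\cong Z^4(X)[n]$.

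For this I would invoke the Bloch--Ogus spectral sequence
$$E_2^{p,q}=H^p(X_{\rm Zar},\cal H^q_X(\bb Z(2)))\Rightarrow H^{p+q}(X(\bb C),\bb Z(2)),$$
whose key input is the vanishing $E_2^{p,q}=0$ for $p>q$. Index chasing shows that every differential entering or leaving $E_2^{1,3}$ hits a vanishing term, so $E_\infty^{1,3}=E_2^{1,3}=H^1(X,\cal H^3_X(\bb Z(2)))$; the same vanishing forces $F^3H^4=0$. The filtration on $H^4(X(\bb C),\bb Z(2))$ therefore reduces to a short exact sequence
$$0\to H^1(X,\cal H^3_X(\bb Z(2)))\to H^4(X(\bb C),\bb Z(2))/F^2\to E_\infty^{0,4}\to 0,$$
in which $E_\infty^{0,4}\hookrightarrow H^0(X,\cal H^4_X(\bb Z(2)))=H^4_{nr}(X,\bb Z(2))$ is torsion-free. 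Passing to $n$-torsion then yields $H^1(X,\cal H^3_X(\bb Z(2)))[n]\isoto (H^4(X(\bb C),\bb Z(2))/F^2)[n]$.

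Finally I would identify $F^2=H^4_{alg}(X(\bb C),\bb Z(2))$: Bloch's formula gives $E_2^{2,2}=H^2(X,\cal H^2_X(\bb Z(2)))\cong \CH^2(X)$, and the composition $\CH^2(X)\onto E_\infty^{2,2}=F^2\into H^4(X(\bb C),\bb Z(2))$ is the topological cycle class map $cyc^c$. The remaining elementary point is $(H^4(X(\bb C),\bb Z(2))/H^4_{alg})[n]=Z^4(X)[n]$: any representative $\alpha\in H^4(X(\bb C),\bb Z(2))$ with $n\alpha\in H^4_{alg}\subseteq Hdg^4$ must itself lie in $Hdg^4$, because its image in $H^4(X(\bb C),\bb C)$ is $\tfrac{1}{n}$ times a Hodge class and the Hodge condition is $\bb Q$-linear. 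The main technical obstacle is the identification of the edge map $\CH^2(X)\to H^4(X(\bb C),\bb Z(2))$ of the spectral sequence with the topological cycle class map; this is standard via the Gersten resolution of $\cal H^2_X(\bb Z(2))$ but requires some care.
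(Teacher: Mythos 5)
The paper offers no proof of this statement at all---it is quoted verbatim from \cite[Prop.\ 3.7]{CTVoisin2012}---and your argument is a correct reconstruction of exactly the argument given there: the sheaf-level Bockstein sequence from \cite[Thm.\ 3.1]{CTVoisin2012}, the Bloch--Ogus spectral sequence with the vanishing $E_2^{p,q}=0$ for $p>q$, the identification $F^2H^4=H^4_{alg}$ via the compatibility of the edge map $H^2(X,\cal H^2_X(\bb Z(2)))\cong \CH^2(X)\to H^4(X(\bb C),\bb Z(2))$ with the cycle class map, and the $\bb Q$-linearity of the Hodge condition to replace $H^4/H^4_{alg}$ by $Hdg^4/H^4_{alg}=Z^4(X)$ after taking $n$-torsion. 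All the steps you flag as delicate (torsion-freeness of $E_\infty^{0,4}\subseteq H^0(X,\cal H^4_X(\bb Z(2)))$ and the edge-map identification) are handled correctly, so there is nothing to add.
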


\begin{definition}
 A variety over a field $k$ is called rationally connected (resp. separably rationally connected) if there is a family of proper algebraic curves $g:U\to Y$ whose geometric fibers are irreducible rational curves with cycle morphism $u:U\to X$ such that
$$u^{(2)}:U\times_YU\to X\times X$$
is dominant (resp. if there is a variety $Y$ and a morphism $u:Y\times \bb P^1\to X$ such that $u^{(2)}$ is dominant and smooth at the generic point).
\end{definition} 

\begin{remark} Being separably rationally connected implies being rationally connected and the converse holds if $ch(k)=0$. 
The notion of a rationally connected variety is a generalisation of rationality which instead of emphasizing global properties of $\bb P^n$ focuses on the property that there are lots of rational curves on $\bb P^n$ (\cite[p. 199]{KollarBookRatCur1995}). Rationally connected varieties have the following nice properties:
\begin{enumerate}
\item[(1)] Proper smooth separably rationally connected varieties over an algebraically closed field are algebraically simply connected (i.e. every finite \'etale cover is trivial) \cite[Sec. 3.4]{Debarre2003}. Proper smooth rationally connected varieties over the complex numbers are simply connected, i.e. $\pi_1^{\mathrm{ab}}(X)=0$ \cite[Cor. 4.18]{Debarre2001}.
\item[(2)] Rational connectedness behaves well in smooth families: let $g:X\to S$ be a smooth morphism, $S$ connected. Assume that $X_s$ is separably rationally connected for some $s\in S$. Then there is an open neighbourhood $s\in U\subset S$ such that $X_u$ is separably rationally connected for every $u\in U$. If in addition $g$ is proper and $ch(S)=0$, then $X_u$ is rationally connected for every $u\in S$ (see \cite[2.4]{KollarMiyaokaMori1992}, \cite[Ch. IV, Thm. 3.11]{KollarBookRatCur1995}).
\item[(3)] If $X$ is a smooth projective rationally connected variety  of characteristic zero, then $H^0(X,(\Omega_X^{p})^{\otimes n})=0$ for all $p,n>0$ \cite[Cor. 4.18]{Debarre2001}.
\item[(4)] If $X$ is a smooth proper separably rationally connected variety over an algebraically closed field, then for any set of distinct closed points $x_1,...,x_n\in X$ there exists a morphism $f:\bb P^1\to X$ such that $x_1,...,x_n\in f(\bb P^1)$ \cite[2.1]{KollarMiyaokaMori1992}. This implies immediately that $\CH_0(X)\cong \bb Z$. In particular, the Chow group of zero-cycles is supported on a subscheme of dimension zero which is useful for applying Proposition \ref{proposition_CTV_action_corresp}.
\end{enumerate}
\end{remark}

\begin{remark}
The following facts are known about the integral Hodge conjecture for $1$-cycles for smooth projective rationally connected varieties:
\begin{enumerate}
\item[(1)] If $X$ is a smooth complex projective threefold which is either uniruled or satisfies $K_X=\roi_X$ and $H^2(X,\roi_X)=0$, then $Z^4(X)=0$ \cite[Thm. 2]{Voisin2006}. Since rationally connected varieties are uniruled, the IHC for $1$-cycles holds in particular for rationally connected smooth projective complex threefolds.
\item[(2)] If Tate’s conjecture holds for degree $2$ Tate classes on smooth projective surfaces defined over a finite field, then the IHC holds for $1$-cycles for any smooth rationally connected variety $X$ over $\bb C$ \cite[Thm. 1.6]{Voisin2013}.
\item[(3)] If $X$ is a Fano fourfold, then $Z^6(X)=0$ \cite{Horing2011}. If $X$ is a Fano manifold of dimension $n \geq  8$ and index $n-3$, then $Z^{2n-2}(X)=0$ \cite{Floris2013}.
\end{enumerate}
\end{remark}

\section{The case of a rationally connected variety over a field}
In this section we prove some cases of Conjecture \ref{conjecture_Kato}(i) in low dimension.

\begin{proposition}\label{proposition_KH1}
Let $X$ be a smooth projective rationally connected variety of dimension $d$ over $\bb C$.
\begin{enumerate}
\item Then 
$$\bb Z/n\bb Z\cong H^{2d}_{\et}(X,\mu_n^{\otimes d-1})\xto{\cong} KH_0^{(-1)}(X)$$
and 
$$0=H^{2d-1}_{\et}(X,\mu_n^{\otimes d-1})\xto{\cong} KH_1^{(-1)}(X).$$
This holds more generally for any smooth projective separably rationally connected variety over an algebraically closed field.
\item If the integral Hodge conjecture holds for $1$-cycles on $X$, then $$ KH_2^{(-1)}(X)=0.$$
\end{enumerate} 
\end{proposition}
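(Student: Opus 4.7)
The plan is to read off both statements from the long exact sequence of Proposition~\ref{proposition_long_exact_sequence}, which reduces everything to the computation of a handful of \'etale cohomology groups of $X$, combined with Hodge-theoretic input in the complex case for part (ii).

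For part (i), the tail of that long exact sequence already yields canonical isomorphisms $H^{2d}_{\et}(X,\mu_n^{\otimes d-1})\xto{\cong}KH_0^{(-1)}(X)$ and $H^{2d-1}_{\et}(X,\mu_n^{\otimes d-1})\xto{\cong}KH_1^{(-1)}(X)$, so I only need to compute the two \'etale groups on the left. Applying Poincar\'e duality for the smooth projective variety $X$ (with $n$ invertible in $k$) identifies them with $H^0_{\et}(X,\mu_n)^{\vee}\cong\bb Z/n\bb Z$ and $H^1_{\et}(X,\mu_n)^{\vee}$ respectively. After trivialising the Tate twist, the second group becomes $\Hom(\pi_1^{\et}(X),\bb Z/n\bb Z)$, which vanishes because proper smooth separably rationally connected varieties over an algebraically closed field are algebraically simply connected. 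This argument uses nothing specific to $\bb C$, so it gives the claimed extension to arbitrary algebraically closed base fields.

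For part (ii), the next segment of the long exact sequence reads
\[
\CH^{d-1}(X)/n\xto{cl}H^{2d-2}_{\et}(X,\mu_n^{\otimes d-1})\onto KH_2^{(-1)}(X)\to 0,
\]
so it is enough to prove that the cycle class map $cl$ is surjective. Via the Betti--\'etale comparison and the short exact sequence
\[
0\to H^{2d-2}(X(\bb C),\bb Z(d-1))/n\to H^{2d-2}(X(\bb C),\bb Z/n(d-1))\to H^{2d-1}(X(\bb C),\bb Z(d-1))[n]\to 0,
\]
the right-hand term vanishes because $H^{2d-1}(X(\bb C),\bb Z)\cong H_1(X(\bb C),\bb Z)=0$ by Poincar\'e duality together with simple connectedness of $X(\bb C)$. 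Hence I am reduced to showing that the integral cycle class map $\CH^{d-1}(X)\to H^{2d-2}(X(\bb C),\bb Z(d-1))$ is surjective. Since $X$ is rationally connected, $H^0(X,\Omega^p_X)=0$ for $p>0$; combining this with Hodge symmetry and Serre duality $h^{p,q}=h^{d-p,d-q}$ gives $h^{d,d-2}=h^{0,2}=h^{2,0}=0$ and likewise $h^{d-2,d}=0$. As the only other admissible Hodge type in total degree $2d-2$ is $(d-1,d-1)$, this forces $H^{2d-2}(X,\bb C)=H^{d-1,d-1}(X)$, and hence $Hdg^{2d-2}(X,\bb Z)=H^{2d-2}(X,\bb Z(d-1))$. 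The IHC for $1$-cycles then identifies Hodge classes with algebraic classes, yielding the required surjectivity.

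The main obstacle is the Hodge-theoretic step in part (ii): to deduce surjectivity of $cl$ from the IHC one must first verify that every integral cohomology class in degree $2d-2$ is already of type $(d-1,d-1)$, since the IHC only identifies Hodge classes with algebraic classes and not a priori with all integral classes. Once this, together with the vanishing $H^{2d-1}(X(\bb C),\bb Z)=0$, is in hand, the remainder of the proof is a mechanical unwinding of the long exact sequence of Proposition~\ref{proposition_long_exact_sequence}.
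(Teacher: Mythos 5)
Your proposal is correct and follows essentially the same route as the paper: both read off the isomorphisms from the long exact sequence of Proposition~\ref{proposition_long_exact_sequence}, kill $H^{2d-1}$ via (Poincar\'e duality and) simple connectedness of rationally connected varieties, and reduce (ii) to surjectivity of the integral cycle class map by showing $H^{2d-2}(X(\bb C),\bb C)=H^{d-1,d-1}(X)$ so that $Hdg^{2(d-1)}(X,\bb Z)$ is all of $H^{2d-2}(X(\bb C),\bb Z(d-1))$ and the IHC applies. The only (harmless) differences are that you run the \'etale Poincar\'e duality argument uniformly over any algebraically closed field rather than treating $\bb C$ separately via singular cohomology, and that you spell out the Hodge-symmetry/Serre-duality computation $h^{d,d-2}=h^{0,2}=h^{2,0}=0$ which the paper leaves implicit.
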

\begin{proof} (i) The first statement immediately follows from  Proposition \ref{proposition_long_exact_sequence}. The second  isomorphism $H^{2d-1}_{\et}(X,\mu_n^{\otimes d-1})\xto{\cong} KH_1^{(-1)}(X)$ also holds by Proposition \ref{proposition_long_exact_sequence}. In order to show that this group vanishes, we consider the short exact sequence
$$0\to \bb Z(d-1)\xto{\times n}\bb Z(d-1)\to \mu_n^{\otimes d-1}\to 0$$
and the induced exact sequence of singular cohomology groups
$$0\to H^{2d-1}_{}(X(\bb C),\bb Z(d-1))/n\to H^{2d-1}_{}(X(\bb C),\mu_n^{\otimes d-1})\to H^{2d}_{}(X(\bb C),\bb Z(d-1))[n]\to 0.$$
By the comparison theorem for \'etale and singular cohomology, we have that $H^{2d-1}_{}(X(\bb C),\mu_n^{\otimes d-1})\cong H^{2d-1}_{\et}(X,\mu_n^{\otimes d-1})$. Since $H^{2d}_{}(X(\bb C),\bb Z(d-1))\cong \bb Z$ we get that $H^{2d}_{}(X(\bb C),\bb Z(d-1))[n]=0$. Furthermore, $H^{2d-1}_{}(X(\bb C),\bb Z(d-1))=0$.
Indeed, by Poincar\'e duality
$$H^{2d-1}_{}(X(\bb C),\bb Z(d-1))\cong H_1(X(\bb C),\bb Z)\cong \pi_1^{\mathrm{ab}}(X(\bb C)).$$
But proper smooth rationally connected varieties over the complex numbers are simply connected, i.e. $\pi_1^{\mathrm{ab}}(X)=0$ \cite[Cor. 4.18]{Debarre2001}. This implies that $H^{2d-1}_{\et}(X,\mu_n^{\otimes d-1})=0$. More generally, over an arbitrary algebraically closed field $k$, we have that 
$$H^{2d-1}_{\et}(X,\mu_n^{\otimes d-1})=H^1(X,\mu_n)^\vee\cong H^1(X,\bb Z/n\bb Z)^\vee=\pi_1^{\mathrm{ab}}(X)/n=0$$
since proper smooth separably rationally connected varieties over an algebraically closed field are algebraically simply connected (i.e. every finite \'etale cover is trivial) \cite[Sec. 3.4]{Debarre2003}.

(ii) By Proposition \ref{proposition_long_exact_sequence} we have an exact sequence
$$\CH^{d-1}(X)/n\to H^{2d-2}_{\et}(X,\mu_n^{\otimes d-1})\to KH_{2}(X)\to 0.$$
We show that the integral Hodge conjecture for $1$-cycles implies that the first map is surjective. We first show that the group $H^{2d-2}_{\et}(X,\mu_n^{\otimes d-1})\cong H^{2d-2}(X(\bb C),\mu_n^{\otimes d-1})$ is isomorphic to $H^{2d-2}(X(\bb C),\bb Z(d-1))/n$. Indeed, the short exact sequence
$$0\to \bb Z(d-1)\xto{\times n}\bb Z(d-1)\to \mu_n^{\otimes d-1}\to 0$$
implies that the sequence
$$0\to H^{2d-2}_{}(X(\bb C),\bb Z(d-1))/n\to H^{2d-2}_{}(X(\bb C),\mu_n^{\otimes d-1})\to H^{2d-1}_{}(X(\bb C),\bb Z(d-1))[n]\to 0$$
is exact. But, as we saw above, $H^{2d-1}_{}(X(\bb C),\bb Z(d-1))=0$. This implies that it suffices to show that the map $\CH^{d-1}(X)\to H^{2d-2}_{}(X(\bb C),\bb Z(d-1))$ is surjective. Since $X$ is rationally connected, $H^{2d-2}_{}(X(\bb C),\bb C(d-1))\cong H^{d-1,d-1}(X)$ and therefore $H^{2d-2}_{}(X(\bb C),\bb Z(d-1))=Hdg^{2(d-1)}$. But by assumption $$Hdg^{2(d-1)}/\rm{im}(cyc^{2d-2,d-1})=Z^{2d-2}(X)=0.$$
\end{proof}

\begin{remark}
In the course of the proof of (ii) we saw that for a smooth projective rationally connected variety over $\bb C$ 
$$\CH^{d-1}(X)\to H^{2d-2}_{}(X(\bb C),\bb Z(d-1))\; \text{is surjective} \Leftrightarrow Z^{2d-2}(X)=0.$$
In particular, the surjectivity of the map $\CH^{d-1}(X)/n\to H^{2d-2}_{\et}(X,\mu_n^{\otimes d-1})$ is strictly weaker than the integral Hodge conjecture for $1$-cycles.
\end{remark}


The following proposition is due to Colliot-Th\'el\`ene-Voisin \cite[Prop. 6.3]{CTVoisin2012}. We spell out the details of the proof in our framework.
\begin{proposition}\label{proposition_low_dimension}\label{proposition_KH2}
Let $X$ be a smooth projective rationally connected variety of dimension $d$ over $\bb C$.
\begin{enumerate}
\item If $d=2$, then $KH^{(-1)}_a(X)=0$ for $a>0$.
\item If $d=3$, then $KH^{(-1)}_a(X)=0$ for $a>0$.
\end{enumerate} 
\end{proposition}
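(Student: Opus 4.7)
The plan is to combine the vanishing results for $KH^{(-1)}_1$ and $KH^{(-1)}_2$ already established in Proposition \ref{proposition_KH1} with additional geometric input, most notably Voisin's integral Hodge conjecture for $1$-cycles on uniruled threefolds. For part (i), I would use the classical fact that every smooth projective rationally connected surface over $\bb C$ is rational (a consequence of the Enriques--Kodaira classification). The vanishing $KH^{(-1)}_a(X) = 0$ for $a > 0$ then reduces immediately to the corollary proved just after Proposition \ref{proposition_birational_invariants}, which treats the smooth projective rational case.

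For part (ii), I would handle the three degrees $a = 1, 2, 3$ separately. The case $a = 1$ is covered by Proposition \ref{proposition_KH1}(i). The case $a = 2$ follows from Proposition \ref{proposition_KH1}(ii) together with Voisin's theorem \cite[Thm.~2]{Voisin2006} that $Z^4(X) = 0$ for every smooth projective uniruled (and hence every rationally connected) threefold over $\bb C$. The remaining case $a = 3$ concerns the top Kato homology $KH^{(-1)}_3(X) = H^3_{nr}(X, \mu_n^{\otimes 2})$. Here I would apply Proposition \ref{proposition_CTV3.7} to obtain an exact sequence
$$0 \to H^3_{nr}(X, \bb Z(2))/n \to H^3_{nr}(X, \mu_n^{\otimes 2}) \to Z^4(X)[n] \to 0,$$
whose right-hand term vanishes once more by Voisin's theorem. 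To dispatch the left-hand term I would combine the torsion-freeness of $H^3_{nr}(X, \bb Z(2))$ (Theorem~3.1 of \cite{CTVoisin2012}) with Proposition \ref{proposition_CTV_action_corresp}(i) applied with $r = 0$: since $\CH_0(X) \cong \bb Z$ for any rationally connected $X$ (so is in particular supported on a point), the group $H^3_{nr}(X, \bb Z(2))$ is annihilated by a nonzero integer. Being simultaneously torsion-free and torsion, it must vanish, and hence $KH^{(-1)}_3(X) = 0$.

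The substantial geometric input that drives the whole argument is Voisin's integral Hodge conjecture for $1$-cycles on uniruled threefolds; everything else amounts to a bookkeeping exercise with the exact sequence of Proposition \ref{proposition_CTV3.7} and the birational/Chow-theoretic results of the previous section. The reason the proof does not extend beyond $d = 3$ is twofold: rationally connected varieties of dimension $\geq 4$ need not be rational, and the analogous IHC for $1$-cycles is unknown in general; any progress in higher dimension must therefore come through a different route, which is the raison d'être of the later sections.
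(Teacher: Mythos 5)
Your proof is correct, and part (ii) is essentially the paper's own argument: reduce to $KH^{(-1)}_3(X)=H^3_{nr}(X,\mu_n^{\otimes 2})$ via Proposition \ref{proposition_KH1} (with $Z^4(X)=0$ from \cite{Voisin2006} feeding the $a=2$ case), then kill both ends of the exact sequence of Proposition \ref{proposition_CTV3.7} using $\CH_0(X)\cong\bb Z$ and Voisin's theorem again; your torsion-free-plus-torsion argument for $H^3_{nr}(X,\bb Z(2))=0$ is exactly what the ``in particular'' of Proposition \ref{proposition_CTV_action_corresp}(i) encapsulates. The only genuine divergence is in part (i): the paper stays within the framework of Proposition \ref{proposition_KH1} and disposes of $KH^{(-1)}_2$ by invoking the Lefschetz theorem on $(1,1)$-classes (the IHC for $1$-cycles on a surface being the statement for divisors), whereas you use that a smooth projective rationally connected surface over $\bb C$ is rational and then quote the corollary on rational varieties, which rests on the birational invariance of $H^{d-a}(X,\cal H^d_X(\mu_n^{\otimes d-1}))$ from Proposition \ref{proposition_birational_invariants}. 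Both are valid; the paper's route has the mild advantage of not needing the classification-theoretic input that rational connectedness implies rationality in dimension $2$, while yours makes the surface case a formal consequence of the $\bb P^d$ computation and illustrates why the method stops at $d=3$ (loss of rationality and of the known IHC), which you correctly point out.
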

\begin{proof}
(i) Let $d=2$. By Proposition \ref{proposition_KH1} and \ref{proposition_KH2} it suffices to show that the integral Hodge conjecture holds for one-cycles on $X$, i.e. $Z^2(X)=0$. This follows from the Lefschetz theorem on $(1,1)$-classes. 

(ii) Let $\dim X=3$.
By Proposition \ref{proposition_KH1} and \ref{proposition_KH2} it suffices to show that
$KH_3^{(-1)}(X)=H^3_{nr}(X,\mu_n^{\otimes 2})=0.$
By Proposition \ref{proposition_CTV3.7} there is a short exact sequence
$$0\to H^3_{nr}(X,\bb Z(2))/n\to H^3_{nr}(X,\mu_n^{\otimes 2})\to Z^4(X)[n]\to 0.$$
The group $H^3_{nr}(X,\bb Z(2))/n=0$ by Proposition \ref{proposition_CTV_action_corresp}(i) since the Chow group of zero-cycles of $X$, being rationally connected, is supported in dimension $0$. The group $Z^4(X)[n]=0$ since the integral Hodge conjecture holds for uniruled threefolds, and therefore rationally connected threefolds, by \cite{Voisin2006}. This implies that $H^3_{nr}(X,\mu_n^{\otimes 2})=0$.

\end{proof}



\section{The case of a family of rationally connected varieties over a dvr}

Let $Y$ be a $k$-variety. We denote by $A_m(Y)$ the group of $m$-cycles modulo algebraic equivalence. Algebraic equivalence is coarser than rational equivalence and we define $R_q(Y)$ by the exact sequence
$$0\to R_q(Y)\to \CH_q(Y)\to A_q(Y)\to 0.$$
Let $J_C$ denote the Jacobian of a smooth curve over $k$. If $J_C(k)/n=0$ for any such $C$, then $R_q(Y)/n=0$ and $\CH_q(Y)/n\cong A_q(Y)/n$. This holds in particular if $k$ is algebraically closed \cite[Lem. 7.10]{BO74}.

\begin{theorem}\label{theorem_Kollar_Tian}
Let $A$ be a henselian discrete valuation ring with algebraically closed residue field $k$. Let $ X$ be a smooth projective scheme of relative dimension $d$ over $S$ with separably rationally connected special fiber $X_k$ and generic fiber $X_K$. Then there is a commutative diagram of surjections
$$\xymatrix{
 \ar@{->>}[d]^-{}_{}   \CH_{2}(X)/n  \ar@{->>}[r]^{res}_-{} & \CH_1(X_k)/n.  \\
\CH_1(X_K)/n  \ar@{->>}[ru]^-{}_{sp}   &     \
}$$
\end{theorem}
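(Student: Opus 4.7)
The plan is to reduce the three surjections to one substantive geometric input---the Koll\'ar--Tian lifting theorem \cite[Thm. 8]{KollarTian} for $1$-cycles on separably rationally connected varieties---together with Fulton-style localization machinery on a regular scheme over a henselian DVR. Since $\dim X=d+1$ and $\dim X_K=\dim X_k=d$, all three groups in the diagram sit in codimension $d-1$: $\CH_2(X)=\CH^{d-1}(X)$, $\CH_1(X_K)=\CH^{d-1}(X_K)$, $\CH_1(X_k)=\CH^{d-1}(X_k)$; the vertical arrow is flat pullback along $j\colon X_K\hookrightarrow X$, and the top arrow is the Gysin pullback $i^\ast$ along the regular closed immersion $i\colon X_k\hookrightarrow X$ of codimension one.

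First I would dispose of the two ``formal'' arrows. The localization exact sequence in codimension $d-1$,
\[
\CH^{d-2}(X_k)\xto{i_\ast}\CH^{d-1}(X)\xto{j^\ast}\CH^{d-1}(X_K)\to 0,
\]
immediately yields the surjectivity of $j^\ast\colon \CH_2(X)\twoheadrightarrow \CH_1(X_K)$, and specialization is defined by $sp(\alpha):=i^\ast\tilde\alpha$ for any lift $\tilde\alpha\in \CH_2(X)$ of $\alpha\in \CH_1(X_K)$. Well-definedness of $sp$ reduces to $i^\ast i_\ast=c_1(N_{X_k/X})\cap(-)=0$, which holds because $X_k$ is the divisor of a uniformizer of $A$ and so has trivial normal bundle in $X$. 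By construction the triangle commutes, so it suffices to verify surjectivity of the Gysin restriction $i^\ast\colon \CH_2(X)/n\to\CH_1(X_k)/n$; the surjectivity of $sp$ modulo $n$ is then automatic.

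The substantive step is this last surjectivity, where the separable rational connectedness of $X_k$ enters. I would represent a given class $\alpha\in\CH_1(X_k)$ modulo $n$ by a $\bb Z$-linear combination $\sum m_\nu[C_\nu]$ of very free rational curves $C_\nu=f_\nu(\bb P^1_k)$ using \cite[Thm. 8]{KollarTian}. Very freeness forces $H^1(\bb P^1_k,f_\nu^\ast T_{X_k/k})=0$, so the obstruction to deforming each $f_\nu$ over the henselian base $S=\Spec A$ vanishes and one obtains $F_\nu\colon \bb P^1_S\to X$ extending $f_\nu$; the image $\tilde C_\nu\subset X$ is two-dimensional, flat over $S$, and satisfies $i^\ast[\tilde C_\nu]=[C_\nu]$. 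Summing gives $\alpha=i^\ast\bigl(\sum m_\nu[\tilde C_\nu]\bigr)$ in $\CH_1(X_k)/n$. The main obstacle is precisely this black-box step: all geometric content---simultaneously the generation of $\CH_1(X_k)/n$ by very free rational curves and the unobstructed lift of each such curve to the total space---is encapsulated in the Koll\'ar--Tian theorem.
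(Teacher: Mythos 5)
Your proposal is correct in outline but takes a genuinely different route for the substantive step. The paper's proof is a short diagram chase through cycles modulo algebraic equivalence: it passes from $\CH_1$ to $A_1$, invokes \cite[Thm.~8]{KollarTian} precisely for the statement that the lower horizontal map $A_1(X_K)/n\to A_1(X_k)/n$ is an isomorphism, and returns to $\CH_1(X_k)/n$ via the identification $\CH_1(X_k)/n\cong A_1(X_k)/n$, which holds because Jacobians of smooth curves over an algebraically closed field are $n$-divisible (\cite[Lem.~7.10]{BO74}, as recalled at the start of that section); the surjectivity of $res$ and $sp$ then falls out formally. You instead re-derive the surjectivity of $res$ geometrically, writing a $1$-cycle on $X_k$ modulo $n$ as a combination of very free rational curves and lifting each curve over the henselian base via the vanishing of $H^1(\bb P^1,f_\nu^*T_{X_k})$ and smoothness of the Hom scheme over $S$; your treatment of the two ``formal'' arrows (localization and the Gysin construction of $sp$ with $i^*i_*=0$) matches standard practice and is fine. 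Two caveats. First, your reading of \cite[Thm.~8]{KollarTian} differs from the paper's: the paper uses it as an isomorphism statement on $A_1/n$ across the family, not as a generation statement for $\CH_1(X_k)/n$ by very free rational curves (the latter is closer to the Tian--Zong circle of results), so you should verify which statement the cited theorem actually is before leaning on it. Second, the step ``represent $\alpha$ modulo $n$ by very free curves'' itself implicitly uses the comparison $\CH_1(X_k)/n\cong A_1(X_k)/n$ that the paper makes explicit, since comb-smoothing arguments produce very free representatives only up to algebraic, not rational, equivalence. With those two points made precise, your argument goes through and is a more self-contained, if less economical, proof of the key surjectivity.
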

\begin{proof}
The statement follows from the commutative diagram
$$\xymatrix{
\CH_1(X_K)/n \ar@{->>}[d]^-{}_{}  & \CH_{2}(X)/n \ar@{->>}[l]^-{}_{} \ar[r]^{}_-{} & \CH_1(X_k)/n \ar[d]^-{\cong} \\
A_1(X_K)/n  \ar[rr]_-{\cong}  &   & A_1(X_k)/n .  \
}$$
in which the right vertical map is an isomorphism since $k$ is algebraically closed and the lower horizontal map is an isomorphism by \cite[Thm. 8]{KollarTian}.
\end{proof}

\begin{theorem}
Let $X$ be a smooth projective scheme of absolute dimension $d$ over a henselian discrete valuation ring with algebraically closed residue field $k$ and $n\in k^\times$.
Then the following holds:
\begin{enumerate}
\item $KH^{(-1)}_a(X)=0$ for $a\leq 3$. 
\item Assume that the special fiber $X_k$ of $X$ is separably rationally connected. Then 
$$KH^{(-2)}_a(X)= \begin{cases}
      0 &  \text{if $a=0$},\\
      \bb Z/n\bb Z & \text{if $a=1$},\\
      0 &  \text{if $a=2,3,\; k=\bb C$, the IHC holds for $1$-cycles on $X_k$ } \\
      &\text{and Conjecture \ref{conjecture_Kato}(2) holds.}
    \end{cases} $$
\item Assume that the special fiber $X_k$ of $X$ is separably rationally connected and $k=\bb C$. If $d=4$, then $KH^{(-2)}_3(X)=0$ and $KH^{(-1)}_4(X)\cong KH^{(-2)}_2(X)\cong \CH^{d-1}(X,1,\bb Z/n)$.
\end{enumerate} 
\end{theorem}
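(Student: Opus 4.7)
The plan is to adapt Proposition \ref{proposition_long_exact_sequence2} to the situation where Conjecture \ref{conjecture_Kato}(2) is not available, so that the single Kato complex appearing there is replaced by a two-step Postnikov filtration coupling $C_n^{-2}$ and $C_n^{-1}$. Set $C := \mathrm{cone}(\bb Z/n(d-2) \to R\epsilon_*\bb Z/n(d-2)) \simeq \tau^{\geq d-1}R\epsilon_*\bb Z/n(d-2)$ by Beilinson--Lichtenbaum. The cohomological-dimension bound from the proof of Proposition \ref{proposition_long_exact_sequence} forces $R^q\epsilon_*\bb Z/n(d-2)=0$ for $q > d$, so $C$ fits into a distinguished triangle
$$\cal H^{d-1}(R\epsilon_*\bb Z/n(d-2))[-(d-1)] \To C \To \cal H^{d}(R\epsilon_*\bb Z/n(d-2))[-d]$$
in $D(X_{\rm Zar},\bb Z/n)$. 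By Bloch--Ogus, the Cousin resolutions of these two cohomology sheaves are respectively $C_n^{-2}(X)$ and $C_n^{-1}(X)$, the identification of the second using $\mu_n\cong\bb Z/n$ on $X$ (valid since $A$ is henselian with algebraically closed residue field, as in the proof of Proposition \ref{proposition_long_exact_sequence2}). Zariski hypercohomology then yields a coupling long exact sequence
$$\cdots \To KH^{(-1)}_{2d+1-p}(X) \To KH^{(-2)}_{2d-1-p}(X) \To \bb H^p(X,C) \To KH^{(-1)}_{2d-p}(X) \To KH^{(-2)}_{2d-2-p}(X) \To \cdots.$$

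For $d=4$ I next compute $\bb H^p(X,C)$ at $p=4, 5$ from the defining triangle of $C$: since $\CH^2(X,-1;\bb Z/n)=0$, one gets $\bb H^5(X,C)\cong H^5_{\et}(X,\mu_n^{\otimes 2})$ and
$\bb H^4(X,C)\cong\mathrm{coker}(\CH^2(X)/n \to H^4_{\et}(X,\mu_n^{\otimes 2}))$. Proper base change for the henselian DVR identifies $H^p_{\et}(X,\mu_n^{\otimes 2}) \cong H^p_{\et}(X_k,\mu_n^{\otimes 2})$; Poincar\'e duality together with $\pi_1^{\et}(X_k)=0$ (standard for SRC varieties) then gives $H^5_{\et}(X_k,\mu_n^{\otimes 2})\cong H^1(X_k,\mu_n)^\vee=0$. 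For the cokernel at $p=4$, Theorem \ref{theorem_Kollar_Tian} supplies the surjection $\CH_2(X)/n \twoheadrightarrow \CH_1(X_k)/n$, and the integral Hodge conjecture for $1$-cycles on rationally connected threefolds -- automatic by \cite{Voisin2006} -- gives via the argument of Proposition \ref{proposition_KH1}(ii) that $\CH^2(X_k)/n \twoheadrightarrow H^4_{\et}(X_k,\mu_n^{\otimes 2})$. Composing, $\CH^2(X)/n \twoheadrightarrow H^4_{\et}(X,\mu_n^{\otimes 2})$, so $\bb H^4(X,C) = \bb H^5(X,C) = 0$.

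Substituting into the coupling LES at $p=4$, and using $KH^{(-1)}_5(X)=0$ (trivially, by length of $C_n^{-1}$), one obtains an exact sequence $0 \to KH^{(-2)}_3(X) \to 0 \to KH^{(-1)}_4(X) \to KH^{(-2)}_2(X) \to 0$, delivering both $KH^{(-2)}_3(X)=0$ and $KH^{(-1)}_4(X) \cong KH^{(-2)}_2(X)$. For the last identification, trivialising Tate twists gives $\cal H^{d-1}(R\epsilon_*\bb Z/n(d-2))\cong \cal H^{d-1}(R\epsilon_*\mu_n^{\otimes d-1})\cong \cal K^M_{d-1}/n$ by the Bloch--Kato theorem (Merkurjev--Suslin); its Gersten resolution is $C_n^{-2}(X)$, so Bloch's formula (Nesterenko--Suslin, Kerz) yields
$KH^{(-2)}_a(X)\cong H^{d-a}_{\rm Zar}(X,\cal K^M_{d-1}/n)\cong \CH^{d-1}(X,a-1;\bb Z/n),$
and specialising to $a=2$ gives $KH^{(-2)}_2(X)\cong \CH^3(X,1;\bb Z/n)$. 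The main obstacle is the vanishing $\bb H^4(X,C)=0$, where Koll\'ar--Tian (Theorem \ref{theorem_Kollar_Tian}) together with the integral Hodge conjecture for rationally connected threefolds (Voisin) enter crucially; everything else is a formal unwinding of the Bloch--Ogus/Postnikov machinery of Propositions \ref{proposition_long_exact_sequence} and \ref{proposition_long_exact_sequence2}.
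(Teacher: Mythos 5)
Your proposal addresses only part (iii) of the theorem. Part (i) is a citation to \cite[Thm. 2.13]{SS10}, but part (ii) --- the identifications $KH^{(-2)}_0(X)=0$ and $KH^{(-2)}_1(X)\cong\bb Z/n\bb Z$, and the vanishing of $KH^{(-2)}_2(X)$ and $KH^{(-2)}_3(X)$ for \emph{arbitrary} $d$ under the stated hypotheses (IHC for $1$-cycles on $X_k$ and Conjecture \ref{conjecture_Kato}(2)) --- is nowhere established. In particular you never produce $KH^{(-2)}_1(X)\cong H^{2d-2}_{\et}(X_k,\mu_n^{\otimes d-2})\cong\bb Z/n\bb Z$ (which the paper gets from part (i), trivialising roots of unity, proper base change and Proposition \ref{proposition_KH1}), nor the general-$d$ case $a=3$, which in the paper uses the long exact sequence of Proposition \ref{proposition_long_exact_sequence2} (hence Conjecture \ref{conjecture_Kato}(2)) together with the surjectivity of $\rho^{2d-4,d-2}_X$ supplied by Theorem \ref{theorem_Kollar_Tian} and the IHC hypothesis. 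This is the main gap: two of the three assertions are unproved.

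For part (iii) your route is essentially the paper's argument in different packaging: the paper runs the coniveau spectral sequence for $j=d-2$ and analyses the filtration $0\subset F_2\subset F_1\subset H^{2d-4}_{\et}(X,\bb Z/n\bb Z(d-2))$, whereas you degenerate the same two top rows into a coupling long exact sequence via the Postnikov triangle of $C=\tau^{\geq d-1}R\epsilon_*\bb Z/n(d-2)$; the decisive inputs ($\bb H^4(X,C)=0$ from Koll\'ar--Tian plus Voisin's IHC for rationally connected threefolds and proper base change, $\bb H^5(X,C)=0$ from simple connectedness of the SRC special fiber) are identical, and the deductions $KH^{(-2)}_3(X)=0$ and $KH^{(-1)}_4(X)\cong KH^{(-2)}_2(X)$ are correct. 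Two caveats. First, you identify the graded pieces of $C$ with $C_n^{-2}(X)$ and $C_n^{-1}(X)$ by invoking Bloch--Ogus Cousin resolutions over a base which is a henselian discrete valuation ring, where Bloch--Ogus (a theorem for varieties over a field) does not apply verbatim; the paper sidesteps this by working directly with the rows of the coniveau spectral sequence, whose $E_2$-terms are the $KH$-groups by definition, and you should do the same. Second, your last step $KH^{(-2)}_2(X)\cong\CH^{d-1}(X,1;\bb Z/n)$ rests on a Gersten resolution of $\cal K^M_{d-1}/n$ and a generalised Bloch formula over the DVR base, neither of which is established in the paper nor standard in this setting; the paper instead extracts this isomorphism from the long exact sequence of Proposition \ref{proposition_long_exact_sequence} together with $KH^{(-1)}_3(X)=0$ and the vanishing of $H^{2d-3}_{\et}(X,\mu_n^{\otimes d-1})$, and you should substitute that argument here.
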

\begin{proof}
(i) This is \cite[Thm. 2.13]{SS10}. (ii) This is trivial for $a=0$. By (i), trivialising roots of unity, and proper base change and Proposition \ref{proposition_KH1} we have that $$KH^{(-2)}_1(X)\cong H_{\et}^{2d-2}(X,\mu_n^{d-2})\cong H_{\et}^{2d-2}(X_k,\mu_n^{d-2})\cong \bb Z/n\bb Z$$ 
and
$$KH^{(-2)}_2(X)/d_2^{d-4,4}(KH^{(-1)}_4(X))\cong H_{\et}^{2d-3}(X,\mu_n^{d-2})\cong H_{\et}^{2d-3}(X_k,\mu_n^{d-2})\cong 0$$ 
(see also the coniveau spectral sequence below). In particular, if Conjecture \ref{conjecture_Kato}(2) holds for $a=4$, then $KH^{(-2)}_2(X)=0$.

For the case $a=3$ we first note that Theorem \ref{theorem_Kollar_Tian} and the IHC hold for $1$-cycles on $X_k$ imply that $\rho^{2d-4,d-2}_X:\CH_2(X)/n\to  H^{2d-4}_{\et}(X,\mu_n^{\otimes d-2})$ is surjective. This follows from the commutative diagram
$$\xymatrix{
\CH_2(X)/n \ar[d]^-{}_{}    \ar@{->>}[r]^{res}_-{} & \CH_1(X_k)/n \ar@{->>}[d]^-{} \\
 H^{2d-4}_{\et}(X,\mu_n^{\otimes d-2}) \ar[r]^{}_-{\cong} & H^{2d-4}_{\et}(X_k,\mu_n^{\otimes d-2})  \
}$$
in which the lower horizontal map is an isomorphism by proper base change, the right vertical map is surjective by Proposition \ref{proposition_KH1}(ii) and $res$ is surjectuve by Theorem \ref{theorem_Kollar_Tian}.
If we assume that Conjecture \ref{conjecture_Kato}(2) holds then we get the long exact sequence (Prop. \ref{proposition_long_exact_sequence2})
$$KH_{4}^{(-2)}(X)\to \CH^{d-2}(X)/n\xto{\rho^{2d-4,d-2}_X} H^{2d-4}_{\et}(X,\mu_n^{\otimes d-2})\to KH_{3}^{(-2)}(X)\to \CH^{d-2}(X,-1;\bb Z/n\bb Z)=0$$
and therefore that $KH_{3}^{(-2)}(X)=0$. 

(iii) Let us assume that $d=4$. Consider the spectral sequence 
$$E_1^{p,q}(X,\bb Z/n\bb Z(j))=\bigoplus_{x\in X^p}H^{q-p}(k(x),\bb Z/n\bb Z(j-p))\Rightarrow H^{p+q}_{\et}(X,\bb Z/n\bb Z(j))$$
for $j=d-2$. There is a filtration
$$0\subset F_2\subset F_1\subset H^{2d-4}_{\et}(X,\bb Z/n\bb Z(d-2))$$
with quotients
$$F_2=(\CH_2(X)/n)/d_2^{0,3}(KH_{4}^{(-2)}(X)),$$
$$F_1/F_2=E_\infty^{d-3,d-1}\cong E_2^{d-3,d-1}\cong KH_{3}^{(-2)}(X),$$
$$H^{2d-4}_{\et}(X,\bb Z/n\bb Z(d-2))/F_1=E_\infty^{d-4,d}\cong E_2^{d-4,d}\cong \ker(d_2^{0,4}).$$
Note that  the isomorphism $E_\infty^{d-3,d-1}\cong E_2^{d-3,d-1}$ holds since $d\leq 4$. The surjectivity of the inclusion $F_2\to H^{2d-4}_{\et}(X,\bb Z/n\bb Z(d-2))$ implies that $F_2=F_1=H^{2d-4}_{\et}(X,\bb Z/n\bb Z(d-2))$ and therefore $KH_{3}^{(-2)}(X)=0$ and $\ker(d_2^{0,4})=0$ which implies that $KH_{4}^{(-1)}(X)\cong KH^{(-2)}_2(X)\cong \CH^{d-1}(X,1,\bb Z/n)$ since $H_{\et}^{2d-3}(X,\mu_n^{d-2})\cong H_{\et}^{2d-3}(X_k,\mu_n^{d-2})\cong 0$.
\end{proof}

\begin{conjecture} Let the notation be as in Theorem \ref{theorem_Kollar_Tian}. Then the natural homomorphisms $\CH_2(X)/n\to \CH_1(X_K)/n\to A_1(X_K)/n$ are isomorphisms.
\end{conjecture}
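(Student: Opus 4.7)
The plan is to upgrade the surjectivities of Theorem~\ref{theorem_Kollar_Tian} to bijectivities by isolating the missing injectivity as two independent assertions. Invoking the isomorphisms $A_1(X_K)/n \cong A_1(X_k)/n \cong \CH_1(X_k)/n$ already used in the proof of Theorem~\ref{theorem_Kollar_Tian}, the composite $\CH_2(X)/n \twoheadrightarrow A_1(X_K)/n$ is naturally identified with the restriction $\mathrm{res}\colon \CH_2(X)/n \to \CH_1(X_k)/n$. Thus the conjecture is equivalent to the conjunction of \textbf{(A)} $R_1(X_K)/n = 0$, and \textbf{(B)} injectivity of $\CH_2(X)/n \to \CH_1(X_K)/n$, which by the localization sequence amounts to the vanishing of the image of $i_{\ast}\colon \CH_2(X_k)/n \to \CH_2(X)/n$.

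For \textbf{(A)}, the approach is to establish $J_C(K)/n = 0$ for every smooth projective curve $C/K$, from which $R_1(X_K)/n = 0$ follows by the method of \cite[Lem.~7.10]{BO74}. In the good reduction case, $J_C$ extends to an abelian scheme $\mathcal{J}/A$; multiplication by $n$ is an \'etale surjection of smooth $A$-schemes (since $n \in k^\times$), hence surjective on $A$-points by henselianness of $A$, so $J_C(K) = \mathcal{J}(A)$ is $n$-divisible. The general case should reduce to this one by passing to a semistable model via de~Jong's alteration theorem, applying the argument to the identity component of the N\'eron model, and controlling the finite component group using $\mathrm{cd}_\ell(K) \leq 1$ for $\ell \mid n$ (which holds because $k$ is algebraically closed).

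For \textbf{(B)} there are two complementary routes. The direct one: since $X_k = \Div(\pi)$ for a uniformizer $\pi \in A$, one has $[X_k] = 0$ in $\CH^1(X)$, and the projection formula yields $i_{\ast} \circ i^{\ast} = \cdot [X_k] = 0$ on $\CH_{\ast}(X)$. Hence the image of $i_{\ast}$ vanishes as soon as $i^{\ast}\colon \CH_3(X)/n \to \CH_2(X_k)/n$ is surjective, i.e., every $2$-cycle class on $X_k$ lifts to a $3$-cycle class on $X$. The conditional route: combining Conjecture~\ref{conjecture_Kato}(3) for $X$ with Conjecture~\ref{conjecture_Kato}(1) for $X_k$, Propositions~\ref{proposition_long_exact_sequence} and~\ref{proposition_long_exact_sequence2} identify both $\CH_2(X)/n$ and $\CH_1(X_k)/n$ with $H^{2d-2}_{\et}(X,\mu_n^{\otimes d-1}) \cong H^{2d-2}_{\et}(X_k,\mu_n^{\otimes d-1})$ (the latter by proper base change), forcing $\mathrm{res}$ to be an isomorphism.

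The hard part will be \textbf{(B)}. The direct lifting of $2$-cycles from $X_k$ to $X$ lies beyond the rational curve technology of \cite{KollarTian}, which only lifts $1$-cycles on $X_k$ to $2$-cycles on $X$; a genuine attack seems to require a deformation theory for families of rational surfaces on rationally connected varieties. The conditional route is logically clean but merely replaces the original problem with the Kato-type vanishings that are themselves open in the required generality.
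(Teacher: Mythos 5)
There is nothing to compare your attempt against: the statement you were asked to prove is stated in the paper as a \emph{conjecture}, with no proof offered, and your own writeup concedes in its last paragraph that the essential content of step \textbf{(B)} (and, implicitly, of \textbf{(A)}) remains open. So what you have is a reduction strategy, not a proof. The sound parts of the reduction are worth recording: the identification of the composite $\CH_2(X)/n\to A_1(X_K)/n$ with $res$ via the diagram in the proof of Theorem \ref{theorem_Kollar_Tian}, the observation that two surjections compose to an isomorphism only if each is one, the localization-sequence reformulation of injectivity, and the fact that $i_*i^*=0$ because $\roi_X(X_k)\cong\roi_X$ (so surjectivity of the Gysin map $i^*\colon \CH_3(X)/n\to\CH_2(X_k)/n$ would suffice). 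The conditional route through Conjecture \ref{conjecture_Kato}(1)--(3) is also logically valid, but, as you say, it assumes statements at least as hard as the conjecture itself.

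There is, however, a genuine error in your route to \textbf{(A)}: the claim that $J_C(K)/n=0$ for every smooth projective curve $C/K$ is false. The mechanism of \cite[Lem.~7.10]{BO74}, as quoted in the paper, requires the ground field to be algebraically closed precisely so that the Jacobians are divisible; over $K=\Frac(A)$ this fails for curves with bad reduction. Concretely, for a Tate elliptic curve $E/K$ with parameter $q$ one has $E(K)\cong K^\times/q^{\bb Z}$, and since $\roi^\times$ is $n$-divisible (as $k$ is algebraically closed and $n\in k^\times$) the valuation induces $E(K)/n\cong \bb Z/\gcd(v(q),n)$, which is nonzero whenever $\gcd(v(q),n)>1$. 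The obstruction is exactly the component group of the N\'eron model, and it cannot be ``controlled using $\mathrm{cd}_\ell(K)\leq 1$'': that inequality kills $H^2(K,J[n])$ and hence gives surjectivity of $H^1(K,J[n])\to H^1(K,J)[n]$, but says nothing about the vanishing of $J(K)/n\hookrightarrow H^1(K,J[n])$. So even granting the reduction of $R_1(X_K)/n=0$ to a statement about Jacobians, the statement you reduce to is false, and \textbf{(A)} would have to be attacked differently --- e.g.\ by showing that the particular algebraically trivial $1$-cycles arising on a variety with separably rationally connected reduction die modulo $n$, which is again open.
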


\begin{remark} Conjecture \ref{conjecture_Kato} is in the case of the theorem closedly related to \cite[Conj. 10.3]{KerzEsnaultWittenberg2016} which asserts in particular that $\CH^{d-1}(X,1,\bb Z/n)\cong \CH^{d-1}(X_k,1,\bb Z/n)$ and the latter group is zero if $d=4$ if $X_k$ is rationally connected (see also \cite{Lu16}). 

It may of course be expected that Conjecture \ref{conjecture_Kato} does not just hold under the assumption that $X$ (or the special fiber of $X$) is rationally connected but also in some other special cases. Indeed, for example \cite[Prop. 6.3]{CTVoisin2012} holds for any smooth projective threefold $X_{\bb C}$ which is uniruled without torsion in the Picard group and such that $H^2(X,\roi_X)=0$. We have tried to phrase the conjecture in the most simple and uniform way. One approach to constructing more examples, in which the conjectures hold, is the following theorem and to use the main theorem of \cite[Sec. 3]{SS10}. \end{remark}
\begin{theorem}
Let $X$ be a smooth projective variety of dimension $d\geq 3$ over an algebraically closed field $k$.
Assume that there is a chain of closed subvarieties
$$X_2\subset X_3\subset \dots \subset X_{d-1}\subset X_d=X$$
such that $\dim X_i=i$ and such that each pair $(X_i,X_{i-1})$ is ample, meaning that $X_i\setminus X_{i-1}=U_i$ is an affine open subset of $X_i$. Assume furthermore that $X_2$ is smooth and rationally connected.
Then the cycle class map $$\rho_{X}^{2d-2,d-1}:\CH_1(X)/n\to H^{2d-2}_{\et}(X,\mu_n^{d-1})$$ is surjective.
\end{theorem}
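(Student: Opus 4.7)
The plan is to propagate surjectivity of the cycle class map up the ample chain $X_2 \subset X_3 \subset \cdots \subset X_d = X$, starting from the smooth rationally connected surface $X_2$ and using Gysin pushforward together with Artin vanishing on the affine strata $U_i = X_i \setminus X_{i-1}$. For the base step, I would show that $\CH^1(X_2)/n = \Pic(X_2)/n \to H^2_{\et}(X_2, \mu_n)$ is surjective: since $X_2$ is a smooth projective separably rationally connected surface over an algebraically closed field, it is rational by the classification of surfaces, so $\mathrm{Br}(X_2) = \mathrm{Br}(\bb P^2_k) = 0$ by birational invariance of the Brauer group for smooth projective surfaces, and the Kummer sequence then yields the conclusion.

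Next, I would apply Gabber's absolute purity to the smooth closed pair $\iota: X_2 \hookrightarrow X$ of codimension $d-2$ to obtain a canonical isomorphism $H^{2d-2}_{X_2}(X, \mu_n^{d-1}) \cong H^2_{\et}(X_2, \mu_n)$. By the compatibility of cycle classes with Gysin pushforward, the composition
\[
\CH^1(X_2)/n \to H^2_{\et}(X_2, \mu_n) \cong H^{2d-2}_{X_2}(X, \mu_n^{d-1}) \to H^{2d-2}_{\et}(X, \mu_n^{d-1})
\]
agrees with $\rho^{2d-2,d-1}_X \circ \iota_*$, where $\iota_*: \CH^1(X_2) \to \CH^{d-1}(X)$ denotes the pushforward of $1$-cycles. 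Via the localization sequence, the theorem therefore reduces to the cohomological vanishing
\[
H^{2d-2}_{\et}(X \setminus X_2, \mu_n^{d-1}) = 0.
\]

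I would establish this vanishing by descending induction on $i$, setting $W_i := X \setminus X_i$ and proving $H^{2d-2}_{\et}(W_i, \mu_n^{d-1}) = 0$ for $i = d-1, d-2, \dots, 2$. The base $W_{d-1} = U_d$ is affine of dimension $d$, so Artin's affine vanishing theorem yields the conclusion since $2d - 2 > d$ for $d \geq 3$. For the inductive step, the localization sequence attached to the closed subset $U_i \subset W_{i-1}$ together with the induction hypothesis reduces the task to showing $H^{2d-2}_{U_i}(W_{i-1}, \mu_n^{d-1}) = 0$ for $i \geq 3$. Since $W_{i-1}$ is smooth of dimension $d$, Verdier duality (and triviality of Tate twists over the algebraically closed base $k$) identifies this cohomology with support with the Borel--Moore homology $H^{BM}_2(U_i, \bb Z/n)$, which is the $\bb Z/n$-linear dual of $H^2_c(U_i, \bb Z/n)$. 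The latter vanishes because $U_i$ is affine of dimension $i \geq 3$ and Artin's vanishing for compactly supported cohomology of affine varieties gives $H^m_c = 0$ for $m < \dim$. The main obstacle is precisely that the intermediate $X_i$ (and hence the strata $U_i$) are not assumed smooth; the passage through Borel--Moore homology, rather than the classical Gysin long exact sequence which would require smoothness of the $U_i$, is what allows the inductive step to go through without any such hypothesis.
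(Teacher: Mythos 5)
Your base step and the reduction to a cohomological vanishing on $X\setminus X_2$ are both sound. The Brauer--Kummer argument for the surjectivity of $\Pic(X_2)/n\to H^2_{\et}(X_2,\mu_n)$ is in fact more robust than the paper's, which invokes Proposition \ref{proposition_low_dimension} --- a statement proved only over $\bb C$ --- whereas your route through Castelnuovo rationality and birational invariance of the Brauer group works over any algebraically closed field, as the theorem requires. The purity/Gysin reduction for the smooth pair $X_2\into X$ and the compatibility with cycle classes are also fine.

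The gap is in the descending induction. You need $H^{2d-2}_{U_i}(W_{i-1},\mu_n^{\otimes d-1})=0$, which you correctly identify with $H^{BM}_2(U_i,\bb Z/n)\cong H^2_c(U_i,\bb Z/n)^\vee$, and you then invoke ``$H^m_c=0$ for $m<\dim$'' for affine varieties. That dual form of Artin vanishing is a perverse-sheaf statement: it holds when $\bb Z/n[\dim U_i]$ is perverse on $U_i$ (e.g.\ $U_i$ smooth, or a local complete intersection), but it fails for general singular affine varieties. For instance, if $U$ is the affine cone over an abelian surface $S$, then localization at the vertex and Poincar\'e duality on the smooth locus give $H^{BM}_2(U,\bb Z/n)\cong H^4(L^{\times},\bb Z/n)$ for the punctured total space $L^{\times}\to S$ of the relevant line bundle, and the Gysin sequence shows this surjects onto $H^3(S,\bb Z/n)\neq 0$; so $H^2_c(U,\bb Z/n)\neq 0$ even though $U$ is affine of dimension $3$. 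Since the intermediate $X_i$, hence the strata $U_i$, are not assumed smooth or lci, your inductive step is unjustified precisely at the point where you claim the Borel--Moore detour circumvents the smoothness issue. For comparison, the paper runs the induction upward along the $X_i$, using the ordinary Artin vanishing $H^{2i}(U_i,\mu_n^{\otimes i})=0$ (valid for singular affine $U_i$ since $2i>i$) together with a Gysin sequence for $X_{i-1}\subset X_i$; the purity needed there for possibly singular pairs is the mirror image of your difficulty. But your argument, as written, rests on a vanishing statement that is false in the stated generality, and this needs to be repaired --- e.g.\ by adding smoothness (or lci) hypotheses on the $X_i$, or by finding an argument that controls the image of $H^{2d-2}_{U_i}(W_{i-1},\mu_n^{\otimes d-1})$ in $H^{2d-2}_{\et}(W_{i-1},\mu_n^{\otimes d-1})$ rather than the group itself.
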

\begin{proof}
Since $X_2$ is rationally connected of dimension $2$, by \ref{proposition_low_dimension} there is an isomorphism $\CH^1(Y_k)/n\to H^2_{\et}(Y_k,\mu_n)$. If $d=3$, then there is a commutative diagram with exact lower row
$$\xymatrix{
\CH_1(X_2)/n \ar[d]^-{\cong}_{\rho_{X_2}^{2,1}} \ar[r]^{}_-{} & \CH_{1}(X_3)/n \ar[d]^-{}_{\rho_{X_3}^{4,2}} &  \\
H^{2}(X_2,\mu_n^{}) \ar[r]_-{}  & H^{4}(X_3,\mu_n^{\otimes 2}) \ar[r]_-{}  & H^{4}(U_3,\mu_n^{\otimes 2}) .  \
}$$
Since the cohomological dimension of the affine scheme $U_3$ is three, $H^{4}(U_3,\mu_n^{\otimes 2})=0$. Therefore the surjectivity of  $\rho_{X_2}^{2,1}$ implies the surjectivity of  $\rho_{X_3}^{4,2}$. For $d\geq i\geq 4$, we consider the diagram
$$\xymatrix{
\CH_1(X_{i-1})/n \ar@{->>}[d]^-{}_{\rho_{X_{i-1}}^{2i-2,i-1}} \ar[r]^{}_-{} & \CH_{1}(X_i)/n \ar[d]^-{}_{\rho_{X_i}^{2i,i}} &  \\
H^{2i-2}(X_{i-1},\mu_n^{i-1}) \ar[r]_-{}  & H^{2i}(X_i,\mu_n^{\otimes i}) \ar[r]_-{}  & H^{2i}(U_{i},\mu_n^{\otimes 2}) .  \
}$$
in which by cohomological dimension $H^{2i}(U_{i},\mu_n^{\otimes 2})=0$ and therefore the surjectivity of  $\rho_{X_{i-1}}^{2i-2,i-1}$ implies the surjectivity of  $\rho_{X_i}^{2i,i}$. This inductively implies the statement of the theorem.
\end{proof}

\bibliographystyle{acm}
\bibliography{Bibliografie}

\end{document}